\newtheorem{theorem}{Theorem}
\newaliascnt{lemma}{theorem}
\newtheorem{lemma}[lemma]{Lemma}
\newaliascnt{corollary}{theorem}
\newtheorem{corollary}[corollary]{Corollary}
\newaliascnt{proposition}{theorem}
\newtheorem{proposition}[proposition]{Proposition}
\newaliascnt{conjecture}{theorem}
\newaliascnt{question}{theorem}
\newtheorem{question}[question]{Question}
\theoremstyle{definition}
\newaliascnt{definition}{theorem}
\newtheorem{definition}[definition]{Definition}
\newaliascnt{remark}{theorem}
\newtheorem{remark}[remark]{Remark}
\newaliascnt{example}{theorem}
\newaliascnt{notation}{theorem}
\newtheorem*{acknowledgements}{Acknowledgements}
\newcommand{\Z}{\mathbb{Z}}
\newcommand{\cX}{\mathcal{X}}
\newcommand{\cY}{\mathcal{Y}}
\newcommand{\cZ}{\mathcal{Z}}
\newcommand{\cW}{\mathcal{W}}
\newcommand{\cI}{\mathcal{I}}
\newcommand{\cC}{\mathcal{C}}
\newcommand{\cD}{\mathcal{D}}
\newcommand{\cU}{\mathcal{U}}
\newcommand{\cO}{\mathcal{O}}
\newcommand{\cV}{\mathcal{V}}
\newcommand{\cF}{\mathcal{F}}
\newcommand{\cG}{\mathcal{G}}
\newcommand{\cH}{\mathcal{H}}
\newcommand{\cM}{\mathcal{M}}
\newcommand{\sL}{\mathscr{L}}
\newcommand{\sM}{\mathscr{M}}
\newcommand{\sI}{\mathscr{I}}
\newcommand{\sJ}{\mathscr{J}}
\newcommand{\bL}{\mathbb{L}}
\newcommand{\bH}{\mathbb{H}}
\newcommand{\bG}{\mathbb{G}}
\newcommand{\bA}{\mathbb{A}}
\newcommand{\fm}{\mathfrak{m}}
\newcommand{\Stack}{\mathbf{Stack}}
\newcommand{\diff}{\mathrm{d}}
\newcommand{\red}{\mathrm{red}}
\newcommand{\id}{\mathrm{id}}
\newcommand{\Gor}{\mathrm{Gor}}
\DeclareMathOperator{\het}{ht}
\DeclareMathOperator{\e}{e}
\DeclareMathOperator{\Spec}{Spec}
\DeclareMathOperator{\Ext}{Ext}
\DeclareMathOperator{\ord}{ord}
\DeclareMathOperator{\Hom}{Hom}
\DeclareMathOperator{\Aut}{Aut}
\DeclareMathOperator{\GL}{GL}
\DeclareMathOperator{\len}{len}
\DeclareMathOperator{\tors}{tors}
\DeclareMathOperator{\Tor}{Tor}
\DeclareMathOperator{\coh}{coh}
\tikzset{cong/.style={draw=none,edge node={node [sloped, allow upside down, auto=false]{$\cong$}}},
         Isom/.style={above,every to/.append style={edge node={node [sloped, allow upside down, auto=false]{$\sim$}}}}}
\title{A motivic change of variables formula for Artin stacks}
\author{Matthew Satriano and Jeremy Usatine}
\thanks{MS was partially supported by a Discovery Grant from the National Science and Engineering Research Council of Canada.}
\address{Matthew Satriano, Department of Pure Mathematics, University of Waterloo}
\email{msatriano@uwaterloo.ca}
\address{Jeremy Usatine, Department of Mathematics, Brown University}
\email{jeremy{\_}usatine@brown.edu}
\begin{document}

\begin{abstract}
Let $\mathcal{X} \to Y$ be a birational map from a smooth Artin stack to a (possibly singular) variety. We prove a change of variables formula that relates motivic integrals over arcs of $Y$ to motivic integrals over arcs of $\mathcal{X}$. With a view toward the study of stringy Hodge numbers, this change of variables formula leads to a notion of crepantness for the map $\mathcal{X} \to Y$ that coincides with the usual notion in the special case that $\cX$ is a scheme.
\end{abstract}

\maketitle

\numberwithin{theorem}{section}
\numberwithin{lemma}{section}
\numberwithin{corollary}{section}
\numberwithin{proposition}{section}
\numberwithin{conjecture}{section}
\numberwithin{question}{section}
\numberwithin{remark}{section}
\numberwithin{definition}{section}
\numberwithin{example}{section}
\numberwithin{notation}{section}

\setcounter{tocdepth}{1}
\tableofcontents

\section{Introduction}

Let $Y$ be a variety. The theory of motivic integration, introduced by Kontsevich \cite{Kontsevich}, defines a \emph{motivic measure} on the \emph{arc scheme} of $Y$ that takes values in a certain completed Grothendieck ring of varieties. A key feature of this motivic measure is a \emph{change of variables formula} that, given a birational modification $X \to Y$, relates the motivic measure for $Y$ to the motivic measure for $X$, see e.g., \cite{DenefLoeser1999, Looijenga}. This change of variables formula is an essential ingredient in applications of motivic integration to many areas, including birational geometry, mirror symmetry, and the study of singularities, especially when one wants to understand invariants of a singular variety in terms of some resolution of singularities.

Numerous results across the last half century has suggested that smooth stacks are a natural tool in studying singular varieties: this perspective goes back at least as far as Deligne and Mumford's seminal work on moduli spaces of curves \cite{DeligneMumford}, is essential in recent work on functorial resolution of singularities \cite{AbramovichTemkinWlodarczyk, McQuillanMarzo}, and pervades (at the very least implicitly) work on the McKay correspondence since Reid's formulation \cite{Reid}. In an example that is particularly relevant to this paper, Yasuda proved that if $Y$ is proper and has Gorenstein quotient singularities and $\cX \to Y$ is the canonical smooth Deligne--Mumford stack associated to $Y$, then the stringy Hodge numbers of $Y$ coincide with the orbifold Hodge numbers of $\cX$ \cite{Yasuda2004}. This result was a consequence of Yasuda's motivic integration for Deligne--Mumford stacks and a change of variables formula \cite[Theorem 1.3]{Yasuda2004} for the map $\cX \to Y$, showing that $\cX$ behaves like a \emph{crepant} resolution of $Y$ (see \autoref{IntroSubsectionCrepantness} below). In particular, this work showed that these stringy Hodge numbers are nonnegative, confirming a special case of a conjecture of Batyrev's \cite[Conjecture 3.10]{Batyrev1998} and demonstrating the utility of a motivic integration and change of variables formula for Deligne--Mumford stacks (even when \emph{a priori} one only cares about singular varieties).

A natural class of varieties with far worse than quotient singularities is those varieties that are the good moduli space $Y$, in the sense of \cite{Alper}, of a smooth Artin stack $\cX$. Such varieties naturally arise in the context of geometric invariant theory: they include all toric varieties as well as quotients of affine space by representations of linearly reductive groups. They generally have worse than quotient singularities and therefore have no obvious \emph{crepant} resolution by a smooth Deligne--Mumford stack. However, the good moduli space map $\cX \to Y$ is a natural candidate for a nice resolution of singularities by a smooth \emph{Artin} stack. In \cite{SatrianoUsatine}, the authors introduced a conjectural (proven in the toric case) framework that uses motivic integration for the stack $\cX$ to study the stringy Hodge numbers of the singular variety $Y$. This framework suggested that when $\cX \to Y$ is birational and has exceptional locus with codimension at least 2, there should be a motivic change of variables formula making $\cX$ behave like a crepant resolution of $Y$ (see \cite[Conjecture 1.1 and Theorem 1.7]{SatrianoUsatine} for precise details). In particular this illuminated that, purely from the perspective of studying singular varieties, there is need for
\begin{enumerate}

\item\label{introItemsChangeOfVariables} a versatile motivic change of variables formula for maps $\cX \to Y$ from smooth Artin stacks to singular varieties and 

\item\label{introItemsCrepantness} a notion of \emph{crepantness} for such maps $\cX \to Y$.\footnote{It is important to note that unlike the case of Deligne-Mumford stacks, defining the canonical bundle for Artin stacks is a subtle issue, so there is no \emph{a priori} obvious definition one can take for $\cX\to Y$ to be crepant.}

\end{enumerate}

In this paper we prove a motivic change of variables formula, \autoref{theoremChangeOfVariables}, that applies in a high level of generality to birational maps from smooth Artin stacks to singular varieties, addressing (\ref{introItemsChangeOfVariables}). We also discuss how the form this change of variables formula takes suggests a useful notion for crepantness, addressing (\ref{introItemsCrepantness}).

\begin{remark}
Since \cite{Yasuda2004}, Yasuda has further developed the theory of motivic integration for \emph{Deligne--Mumford} stacks and obtained quite general change of variables formulas in that setting, see e.g., \cite{Yasuda2006, Yasuda2019}. 

Also, Balwe introduced a motivic integration and change of variables formula for Artin $n$-stacks \cite[Theorem 7.2.5]{Balwe}. However, his hypothesis that the map be ``0-truncated'' prevents his formula from applying to our main case of interest here, namely maps form Artin stacks to varieties.
\end{remark}

\begin{remark}
Although we are not aware of any direct concrete relationship, this work shares a spiritual connection with the literature on variation of geometric invariant theory in the context of the D-equivalence conjecture, such as \cite{BallardFaveroKatzarkov, DonovanSegal, HalpernLeistner, HalpernLeistnerSam}, in which derived categories of Artin stacks are used to produce derived equivalences between certain birational varieties. In a vague sense, this connection is analogous to the relationship between the motivic McKay correspondence and the derived McKay correspondence.
\end{remark}

\subsection{Conventions}

Throughout this paper, let $k$ be an algebraically closed field of characteristic 0. For any stack $\cX$ over $k$, we let $|\cX|$ denote its associated topological space, and for any subset $\cC \subset |\cX|$ and field extension $k'$ of $k$, we let $\cC(k')$ (resp. $\overline{\cC}(k')$) denote the category of (resp. set of isomorphism classes of) $k'$-valued points of $\cX$ whose class in $|\cX|$ is contained in $\cC$.

\subsection{Change of variables formula} 

The main result of this paper is a change of variables formula relating motivic integrals over arcs of a possibly-singular variety $Y$ to motivic integrals over arcs of a smooth Artin stack $\cX$. A key input to this change of variables formula is a certain \emph{height function} $\het^{(0)}_{E}$ associated to an object $E$ in the derived category of $\cX$. The value of $\het^{(0)}_{E}$ at an arc $\Spec(k'\llbracket t \rrbracket) \to \cX$ is given by pulling back $E$ to $\Spec(k'\llbracket t \rrbracket)$ and taking the dimension (over $k'$) of the resulting complex's 0th (hyper)cohomology, see \autoref{definitionHeightFunctions} for a precise definition of $\het^{(0)}_{E}$. We expect that these height functions will play an important role in motivic integration for Artin stacks, even beyond the change of variables formula, as \autoref{SumOfHeightFunctionIsMeasurable} shows they are natural functions to integrate. 

We now state our change of variables formula. Our notions of cylinders, the arc stack $\sL(\cX)$, the motivic measure $\mu_\cX$, and constructible functions and their integrals are all straightforward generalizations of the corresponding notions in the case of schemes. See \autoref{sectionMotivicIntegrationForSmoothArtinStacks} or \cite[Section 3]{SatrianoUsatine} for precise definitions.

\begin{theorem}\label{theoremChangeOfVariables}
Let $\cX$ be a smooth irreducible finite type Artin stack over $k$ with affine geometric stabilizers and separated diagonal, let $Y$ be an irreducible finite type scheme over $k$ with $\dim Y = \dim \cX$, let $\cX \to Y$ be a morphism, and let $\cU$ be an open substack of $\cX$ such that $\cU \hookrightarrow \cX \to Y$ is an open immersion.

\begin{enumerate}[(a)]

\item\label{theoremMeasurablePartOfCOV} Let $L_{\cX/Y}$ and $L_{\cI_\cX/ \cX}$ be the relative contangent complexes of $\cX \to Y$ and the inertia map $\cI_\cX \to \cX$, respectively, and let $\sigma: \cX \to \cI_\cX$ be the identity section of $\cI_\cX \to \cX$. For any cylinder $\cC \subset |\sL(\cX)| \setminus |\sL(\cX \setminus \cU)| \subset |\sL(\cX)|$, the function 
\[
	\het^{(0)}_{L\sigma^*L_{\cI_\cX/\cX}} - \het^{(0)}_{L_{\cX/Y}} : \cC \to \Z
\]
is constructible.

\item\label{theoremPartChangeOfVariables} Let $\cC \subset |\sL(\cX)| \setminus |\sL(\cX \setminus \cU)| \subset |\sL(\cX)|$ and $D \subset \sL(Y)$ be cylinders such that, for all field extensions $k'$ of $k$, the map $\overline{\cC}(k') \to D(k')$ is a bijection. Then
\[
	\mu_Y(D) = \int_{\cC}\bL^{\het^{(0)}_{L\sigma^*L_{\cI_\cX/\cX}} - \het^{(0)}_{L_{\cX/Y}}} \diff \mu_\cX.
\]

\end{enumerate}
\end{theorem}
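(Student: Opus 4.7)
The plan is to bootstrap \autoref{theoremChangeOfVariables} from the classical scheme-theoretic change of variables formula by choosing a smooth atlas $p : V \to \cX$ with $V$ a scheme and applying the classical formula to $V \to Y$, then transferring the resulting integral from $\sL(V)$ to $\sL(\cX)$ along $\sL(p)$. In this transfer, the stacky correction $\het^{(0)}_{L\sigma^*L_{\cI_\cX/\cX}}$ should arise as the motivic ``fiber measure'' of the map $\sL(p)$.

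A preparatory step, and the main input to part (a), is that both heights are finite on any cylinder $\cC \subset |\sL(\cX)| \setminus |\sL(\cX \setminus \cU)|$. The identification $L\sigma^*L_{\cI_\cX/\cX} \simeq L_\cX[1]$, obtained by viewing $\cI_\cX \to \cX$ as the base change of the diagonal $\Delta : \cX \to \cX \times \cX$ along itself and then pulling back along the identity section, shows that $L\sigma^*L_{\cI_\cX/\cX}$ has cohomology supported on the locus of nontrivial inertia. This locus is contained in $\cX \setminus \cU$ since the composite $\cU \hookrightarrow \cX \to Y$ is an open immersion (which forces $\cU$ to be a scheme with trivial inertia); similarly $L_{\cX/Y}$ is supported on $\cX \setminus \cU$. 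Hence for any $\gamma \in \cC$, both pullbacks have $t$-torsion $H^0$ and are finite-dimensional over $k'$. Constructibility for part (a) then follows from the general measurability machinery of \autoref{SumOfHeightFunctionIsMeasurable} applied to each height function separately and taking the difference.

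For part (b), I would apply the classical change of variables to $V \to Y$, obtaining $\mu_Y(D)$ as a motivic integral over a cylinder $\cC_V \subset \sL(V)$ bijecting onto $D$ with integrand $\bL^{-\het^{(0)}_{L_{V/Y}}}$. Using the distinguished triangle $p^*L_{\cX/Y} \to L_{V/Y} \to L_{V/\cX}$, this integrand splits into contributions from $L_{\cX/Y}$ and $L_{V/\cX}$. Descending the integral to $\cC \subset \sL(\cX)$ along $\sL(p)$ via a motivic Fubini, the $L_{V/\cX}$ contribution should cancel against the dimensions of the fibers of $\sL(p)$, leaving the asserted integrand. The main obstacle is executing this Fubini rigorously when the stabilizer dimension of $\cX$ jumps along $\cC$: the fibers of $\sL(p)$ are then not of constant dimension, and one must show that this variable jumping is captured on the nose by $\bL^{\het^{(0)}_{L\sigma^*L_{\cI_\cX/\cX}}}$. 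Proving a version of motivic Fubini for smooth maps of stacks sensitive to the variable stabilizer structure — and matching the resulting fiber correction precisely with the stacky height — is where most of the real work lies.
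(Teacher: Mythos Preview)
For part (a), your argument is essentially the paper's, with one caveat: \autoref{SumOfHeightFunctionIsMeasurable} only shows that the \emph{sum} $\het^{(i)}_E + \het^{(i+1)}_E$ is constructible, not $\het^{(i)}_E$ individually. The paper extracts $\het^{(0)}_{L_{\cX/Y}}$ by first observing that $\het^{(2)}_{L_{\cX/Y}} = 0$, so $\het^{(1)}_{L_{\cX/Y}}$ is constructible, and then subtracting it from the constructible function $\het^{(0)}_{L_{\cX/Y}} + \het^{(1)}_{L_{\cX/Y}}$.

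For part (b), your atlas strategy is genuinely different from the paper's and has a real obstruction at the very first step. The classical change of variables for $V \to Y$ requires a cylinder $\cC_V \subset \sL(V)$ with $\cC_V(k') \to D(k')$ bijective, but no such cylinder exists: since $\cX$ is an honest Artin stack, any smooth atlas $p\colon V \to \cX$ has positive relative dimension, so $\sL(p)$ has positive-dimensional fibers over every arc, and no cylinder in $\sL(V)$ can biject onto $\cC$ (hence onto $D$). Nor can one sidestep this by invoking a change of variables for smooth morphisms, because $V \to Y$ is only smooth over the open $\cU$ and is neither smooth nor birational globally. Your proposed remedy, a motivic Fubini for $\sL(V) \to \sL(\cX)$ with the correct stabilizer-sensitive correction, is not an established tool; proving it would require computing how the automorphism groups of jets of $\cX$ vary, which is precisely the content of the paper's \autoref{theoremStabilizersOfJets}, so the atlas reduction does not actually reduce the difficulty.

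The paper instead works directly on the jet stacks $\sL_n(\cX)$ without passing to an atlas. After reducing to the case where all relevant height and order functions are constant on $\cC$, it shows (\autoref{theoremStabilizersOfJets}) that for $n$ large the stabilizer of $\theta_n(\varphi)$ is $\bG_a^{h_0}$ with $h_0 = \het^{(0)}_{L\sigma^*L_{\cI_\cX/\cX}}(\varphi)$, stratifies $\theta_{n-e}(\cC)$ into trivial $\bG_a^{h_0}$-gerbes (\autoref{stratifyLemmaIntoGaGerbes}), and then compares $\e(\theta_n(\cC))$ with $\e(\theta_n(D))$ directly by computing, via square-zero deformation theory, the fibers of $\sL_n(\pi)$ over each stratum (\autoref{propositionchangeOfVariablesConstantCase}). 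A separate injectivity-type lemma (\autoref{technicalLemma}) controls how close two arcs of $\cX$ must be when their images in $Y$ agree to high order; this replaces the bijection hypothesis at finite level.
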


\begin{remark}
By \autoref{heightFunctionRemarkWhenNotInfinity} below, $\het^{(0)}_{L\sigma^*L_{\cI_\cX/\cX}}$ and $\het^{(0)}_{L_{\cX/Y}}$ take only integer values on $\cC$.
\end{remark}

We see that the height functions $\het^{(0)}_{L_{\cX/Y}}$ and $\het^{(0)}_{L\sigma^*L_{\cI_\cX/\cX}}$ control the change of variables formula. Note that in the case where $\cX$ is a scheme, the function $\het^{(0)}_{L_{\cX/Y}}$ coincides with the order function of the relative jacobian ideal of $\cX \to Y$, i.e., is the function controlling Kontsevich's original motivic change of variables formula. However, when $\cX$ is a scheme (or even a Deligne--Mumford stack), $L\sigma^*L_{\cI_\cX/\cX}$ is isomorphic to the 0 object, so the function $\het^{(0)}_{L\sigma^*L_{\cI_\cX/\cX}}$ is identically zero. Therefore $\het^{(0)}_{L\sigma^*L_{\cI_\cX/\cX}}$ is a novel contribution that only appears when dealing with Artin stacks that are not Deligne--Mumford. Note that this function, which controls the stabilizer groups of the jet stacks of $\cX$ (see \autoref{theoremStabilizersOfJets}), is nonzero in general (see e.g., \cite[Section 5]{SatrianoUsatine}, or consider the stabilizers of jets in simple cases such as $[\bA_{k}^2/ \bG_{m,k}]$ where $\bG_{m,k}$ acts with weights 1 and -1).

Finally, we note that cylinders $\cC$ and $D$ satisfying the hypotheses of \autoref{theoremChangeOfVariables} seem to arise frequently. For example, the change of variables formula  \cite[Theorem 1.7]{SatrianoUsatine} for toric Artin stacks is easily reduced to cylinders satisfying these hypotheses by \cite[Theorem 4.9]{SatrianoUsatine}. Another family of examples of such $\cC$ and $D$ is the subject of \cite[Proposition 10.2]{SatrianoUsatine}, which are examples involving quotients of affine space by certain representations of $\mathrm{SL}_2$.

\begin{remark}
\cite[Theorem 1.7]{SatrianoUsatine} does not immediately follow from \autoref{theoremChangeOfVariables}, as such an implication would require additional control of $\het^{(0)}_{L\sigma^*L_{\cI_\cX/\cX}} - \het^{(0)}_{L_{\cX/Y}}$ in that case, see \autoref{questionSmallResolutionCrepant} below. 
\end{remark}

\subsection{Crepantness and applications to stringy invariants}\label{IntroSubsectionCrepantness}

Now suppose $Y$ is a variety over $k$ with log-terminal singularities, and for expositional simplicity, also assume the canonical divisor $K_Y$ is Cartier. Batyrev introduced a notion of \emph{stringy Hodge numbers} for $Y$ \cite{Batyrev1998}. If $Y$ is proper and $X \to Y$ is a crepant resolution of $Y$ (by a scheme $X$), the stringy Hodge numbers of $Y$ coincide with the usual Hodge numbers of $X$. Refining the notion of stringy Hodge numbers, Denef and Loeser introduced the \emph{Gorenstein measure} $\mu^\Gor_Y$ on $\sL(Y)$ \cite{DenefLoeser2002}. If $X \to Y$ is a crepant resolution of $Y$ (by a scheme $X$), then $\mu^\Gor_Y$ is essentially the usual motivic measure $\mu_X$. A key insight of Yasuda's was that if $Y$ has only quotient singularities and $\cX \to Y$ is the canonical smooth Deligne--Mumford stack associated to $Y$, then $\mu^\Gor_Y$ is essentially (an orbifold version, in the sense of orbifold cohomology \cite{ChenRuan}, of) $\mu_\cX$, i.e., from this perspective $\cX$ behaves like a crepant resolution of $Y$.

Now suppose that $\cX$ is an Artin stack over $k$, that $\cX \to Y$ is a morphism, and that these satisfy the hypotheses of \autoref{theoremChangeOfVariables}. Unlike when $\cX$ is a scheme (or even a Deligne--Mumford stack), we are not aware of a general definition of relative canonical divisor $K_{\cX/Y}$ that produces a nice definition for crepantness of $\cX \to Y$. We therefore take an approach hinted at by the properties of $\mu^\Gor_Y$ above: we propose that one should consider $\cX \to Y$ to be crepant when $\mu^\Gor_Y$ and $\mu_\cX$ have a certain nice relationship; specifically, we propose that $\cX \to Y$ should be considered crepant if whenever $\cC$ and $D$ satisfy the hypotheses in \autoref{theoremChangeOfVariables}, we have $\mu^\Gor_Y(D) = \mu_\cX(\cC)$. \autoref{theoremChangeOfVariables} tells us how to characterize this property in terms of the height functions $\het^{(0)}_{L\sigma^*L_{\cI_\cX/\cX}}$ and $\het^{(0)}_{L_{\cX/Y}}$, as we now make precise.

Let $\omega_{Y} = \iota_*\Omega^{\dim Y}_{Y_\mathrm{sm}}$, where $\iota: Y_\mathrm{sm} \to Y$ is the inclusion of the smooth locus, and let $\sJ_Y$ be the unique ideal sheaf on $Y$ such that $\sJ_Y \omega_Y$ is the image of $\Omega_Y^{\dim Y} \to \omega_Y$. Recall that for any cylinder $D \subset \sL(Y)$,
\[
	\mu^\Gor_Y(D) = \int_D \bL^{\ord_{\sJ_Y}} \diff \mu_Y.
\]

\begin{definition}\label{definitionCrepantness}
Let $\cX$ be a smooth irreducible finite type Artin stack over $k$ with affine geometric stabilizers and separated diagonal, let $Y$ be an integral finite type separated scheme over $k$ with log-terminal singularities and Cartier canonical divisor, assume $\dim Y = \dim \cX$, and let $\pi: \cX \to Y$ be a morphism. We say $\pi$ satisfies ($\triangle$) if there exists a nonempty open substack $\cU$ of $\cX$ such that $\cU \hookrightarrow \cX \to Y$ is an open immersion and
\[
	\ord_{\sJ_Y} \circ \sL(\pi) + \het^{(0)}_{L\sigma^*L_{\cI_\cX/\cX}} - \het^{(0)}_{L_{\cX/Y}} = 0
\]
on $|\sL(\cX)| \setminus |\sL(\cX \setminus \cU)|$.
\end{definition}

\begin{remark}
If $\cX = X$ is a variety, $(\triangle)$ coincides with the condition $K_{X/Y} = 0$. Indeed, when $\cX = X$ is a variety
\[
	\ord_{\sJ_Y} \circ \sL(\pi) + \het^{(0)}_{L\sigma^*L_{\cI_\cX/\cX}} - \het^{(0)}_{L_{\cX/Y}} = \ord_{\sJ_Y} \circ \sL(\pi) - \mathrm{ordjac}_\pi,
\]
and the latter vanishes if and only if $K_{X/Y} = 0$, see e.g., \cite[Chapter 7 Proposition 3.2.5]{ChambertLoirNicaiseSebag}.
\end{remark}

An immediate consequence of \autoref{theoremChangeOfVariables} and \autoref{definitionCrepantness} is that if $\pi$ satisfies $(\triangle)$, then $\mu^\Gor_Y(D) = \mu_\cX(\cC)$ for any $\cC, D$ satisfying the hypotheses of \autoref{theoremChangeOfVariables}(\ref{theoremPartChangeOfVariables}). We therefore propose $(\triangle)$ as a notion for crepantness. We expect a better understanding of this condition will have applications to studying stringy Hodge numbers. As an example of such an application, we end this introduction by posing the following natural question, which explicitly ties \autoref{theoremChangeOfVariables} and \autoref{definitionCrepantness} back to the framework proposed in \cite{SatrianoUsatine}.

\begin{question}\label{questionSmallResolutionCrepant}
If $\pi: \cX \to Y$ is a small resolution, i.e., there exists an open subset $U \subset Y$ such that $\pi^{-1}(U) \to U$ is an isomorphism and $\cX \setminus \pi^{-1}(U)$ has codimension at least 2, does $\pi$ satisfy $(\triangle)$?
\end{question}

Note that \autoref{theoremChangeOfVariables} shows that \cite[Conjecture 1.1]{SatrianoUsatine} is essentially equivalent to \autoref{questionSmallResolutionCrepant} having an affirmative answer.

\begin{acknowledgements}
It is a pleasure to thank Dan Abramovich and Dan Edidin for helpful conversations. We would especially like to thank Ariyan Javanpeykar and Siddharth Mathur for helpful discussions concerning \autoref{stratifyLemmaIntoGaGerbes}.
\end{acknowledgements}

\section{Motivic integration for smooth Artin stacks}\label{sectionMotivicIntegrationForSmoothArtinStacks}

In this section, we set up the formalism and notation for a motivic integration for smooth Artin stacks. For any Artin stack $\cX$ over $k$, we let $\sL(\cX)$ denote the arc stack of $\cX$, and for each $n \in \Z_{\geq 0}$, we let $\sL_n(\cX)$ denote the $n$th jet stack of $\cX$. In other words, $\sL(\cX) = \varprojlim_{n} \sL_n(\cX)$ and $\sL_n(\cX)$ is the Weil restriction of $\cX \otimes_k k[t]/(t^{n+1})$ along the morphism $\Spec(k[t]/(t^{n+1})) \to \Spec(k)$. We will let $\theta_n: \sL(\cX) \to \sL_n(\cX)$ and $\theta^n_m: \sL_n(
\cX) \to \sL_m(\cX)$ denote the truncation morphisms. See \cite[Section 3]{SatrianoUsatine} for some of the basic properties of $\sL(\cX)$ and $\sL_n(\cX)$. In particular, we recall that if $\cX$ is finite type over $k$, then so is each $\sL_n(\cX)$, and if $\cX$ has affine geometric stabilizers, so does each $\sL_n(\cX)$. For us, the important subsets of $|\sL(\cX)|$ will be the so-called cylinders, whose definition we now recall.

\begin{definition}
Let $\cX$ be a finite type Artin stack over $k$. A subset of $|\sL(\cX)|$ is called a \emph{cylinder} if it is of the form $\theta_n^{-1}(\cC_n)$ for some $n \in \Z_{\geq 0}$ and constructible subset $\cC_n \subset |\sL_n(\cX)|$.
\end{definition}

\begin{proposition}\label{propositionConstructibleTopologyIsCompact}
Let $\cX$ be a finite type Artin stack over $k$. Then any cover of $|\sL(\cX)|$ by cylinders has a finite subcover.
\end{proposition}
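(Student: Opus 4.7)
I will prove the equivalent dual statement: any family $\{\cC_i\}_{i \in I}$ of cylinders in $|\sL(\cX)|$ with the finite intersection property (FIP) satisfies $\bigcap_i \cC_i \neq \emptyset$. This reformulation is valid since cylinders form a Boolean algebra: the complement of $\theta_n^{-1}(C)$ is $\theta_n^{-1}(|\sL_n(\cX)| \setminus C)$. Since $\cX$ is finite type over $k$, each $\sL_n(\cX)$ is a finite type Artin stack, so $|\sL_n(\cX)|$ is a Noetherian sober topological space, and on any such space a family of constructible subsets with FIP has nonempty intersection: in any ultrafilter extension, Noetherian DCC produces a minimal closed member $Z$, which must be irreducible (else two proper closed pieces contradict FIP), and sobriety provides a generic point $\eta_Z$ that lies in every constructible in the ultrafilter.

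Using Zorn's lemma, I will extend $\{\cC_i\}$ to a maximal FIP family $\mathcal{M}$ of cylinders. For each $n$, the collection $\mathcal{M}_n := \{D \subset |\sL_n(\cX)| \text{ constructible} : \theta_n^{-1}(D) \in \mathcal{M}\}$ is then an ultrafilter of constructibles, producing a canonical point $p_n \in |\sL_n(\cX)|$ lying in every $D \in \mathcal{M}_n$. Since pullback along $\theta^{n+1}_n$ carries $\mathcal{M}_n$ into $\mathcal{M}_{n+1}$, uniqueness of $p_n$ forces $\theta^{n+1}_n(p_{n+1}) = p_n$, yielding a compatible sequence $(p_n) \in \varprojlim_n |\sL_n(\cX)|$.

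The main step is to lift $(p_n)$ to a point $p \in |\sL(\cX)|$. Each $p_n$ is represented by a morphism $\xi_n : \Spec K_n\llbracket t\rrbracket / (t^{n+1}) \to \cX$ for some field $K_n$, and compatibility of the $(p_n)$ provides 2-isomorphisms between $\xi_n$ and the appropriate truncation of $\xi_{n+1}$ after a common field extension. I plan to build inductively an ascending tower of fields $K_0 \subseteq K_1 \subseteq \cdots$ together with morphisms $\xi_n : \Spec K_n\llbracket t\rrbracket/(t^{n+1}) \to \cX$ that are compatible on-the-nose, enlarging the field at each stage to realize the chosen 2-isomorphism and preserve coherence with earlier choices. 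Setting $K := \varinjlim_n K_n$, the $\xi_n$'s then assemble, via the identification $\sL(\cX) = \varprojlim_n \sL_n(\cX)$, into a morphism $\Spec K\llbracket t\rrbracket \to \cX$, i.e., a point $p \in |\sL(\cX)|$ with $\theta_n(p) = p_n$. Since each $\cC_i = \theta_{n_i}^{-1}(D_i)$ has $p_{n_i} \in D_i$, we conclude $p \in \cC_i$ for all $i$.

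I expect the lifting step to be the main obstacle: it requires 2-categorical bookkeeping specific to Artin stacks. For schemes, the analogous lifting is straightforward via a colimit of residue field extensions, but for Artin stacks the compatibility $\theta^{n+1}_n(p_{n+1}) = p_n$ only guarantees 2-isomorphic representatives after further base change, so the ascending 2-morphisms must be rigidified coherently up the entire tower—potentially via transfinite field extensions—before the colimit produces a genuine arc.
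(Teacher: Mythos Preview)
Your approach is viable but considerably more involved than the paper's. The paper simply chooses a finite type smooth cover $V \to \cX$ by a scheme, observes that $\sL(V) \to |\sL(\cX)|$ is surjective (by infinitesimal lifting along the smooth map $V\to\cX$) and that preimages of cylinders are cylinders, and thereby reduces immediately to the well-known scheme case. By contrast, you reprove compactness directly via an ultrafilter argument at each finite level and then try to lift a compatible sequence $(p_n) \in \varprojlim_n |\sL_n(\cX)|$ to a genuine arc. The lifting step you worry about does in fact go through, and more easily than you fear: since the tower is indexed by $\mathbb{N}$ and $\cX$ is a $1$-stack, there is no higher coherence to verify---one simply chooses $\xi_{n+1}$ and an isomorphism $\alpha_n\colon \theta^{n+1}_n(\xi_{n+1}) \cong \xi_n$ inductively, enlarging the field once at each stage, so a countable union of fields suffices and no transfinite construction is needed. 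Your route has the modest virtue of being self-contained, but the paper's reduction is both shorter and conceptually cleaner: the surjectivity of $\sL(V) \to |\sL(\cX)|$ already encodes, in one stroke, exactly the lifting of arcs that you labor to produce by hand.
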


\begin{proof}
Let $V \to \cX$ be a finite type smooth cover. Then infinitesimal lifting implies that $\sL(V) \to |\sL(\cX)|$ is surjective. It is easy to verify that the preimage in $\sL(V)$ of a cylinder in $|\sL(\cX)|$ is a cylinder, so the proposition reduces to the well known case where $\cX$ is a scheme, see e.g., \cite[Chapter 6 Example 4.1.3]{ChambertLoirNicaiseSebag}. 
\end{proof}

For the remainder of this section, fix a smooth irreducible finite type Artin stack $\cX$ over $k$ with affine geometric stabilizers. We will recall that there is a motivic measure $\mu_\cX$ that assigns to each cylinder in $|\sL(\cX)|$ an element of a modified Grothendieck ring of varieties. First, we set some notation for that modified Grothendieck ring of varieties. We refer to \cite[Subsection 2.2]{SatrianoUsatine} for further details. Let $\widehat{\sM}_k$ denote the ring obtained by starting with the Grothendieck ring of $k$-varieties, inverting the class of $\bA_k^1$, and then completing with respect to the dimension filtration. Let $K_0(\Stack_k)$ denote the Grothendieck ring of stacks in the sense of \cite{Ekedahl}, and let $K_0(\Stack_k) \to \widehat{\sM}_k$ denote the unique ring homomorphism that takes each class in $K_0(\Stack_k)$ of any finite type $k$-scheme to its class in $\widehat{\sM}_k$. For any finite type Artin stack $\cY$ over $k$ with affine geometric stabilizers, let $\e(\cY) \in \widehat{\sM}_k$ denote the image along $K_0(\Stack_k) \to \widehat{\sM}_k$ of the class of $\cY$ in $K_0(\Stack_k)$, and for any constructible subset $\cD \subset |\cY|$, let $\e(\cD) \in \widehat{\sM}_k$ denote the image along $K_0(\Stack_k) \to \widehat{\sM}_k$ of the class of $\cD$ in $K_0(\Stack_k)$. We will let $\bL$ denote the class of $\bA_k^1$ in $\widehat{\sM}_k$. We now recall the definition of $\mu_\cX$.

\begin{definition}
Let $\cC \subset |\sL(\cX)|$ be a cylinder. The \emph{motivic measure} of $\cC$ is
\[
	\mu_\cX(\cC) = \varinjlim_n \e(\theta_n(\cC))\bL^{-(n+1)\dim\cX} \in \widehat{\sM}_k.
\]
Note that each $\theta_n(\cC) \subset |\sL_n(\cX)|$ is constructible and the above limit converges (in fact the sequence stabilizes) by \cite[Theorem 3.33]{SatrianoUsatine}.
\end{definition}

\begin{definition}
Let $\cC \subset |\sL(\cX)|$ be a cylinder. A function $f: \cC \to \Z$ is called \emph{constructible} if for all $n \in \Z$, the set $f^{-1}(n) \subset |\sL(\cX)|$ is a cylinder.
\end{definition}

\begin{remark}
We use the terminology ``constructible function'' for consistency with \cite{ChambertLoirNicaiseSebag}. However, unlike in the case where $\cX$ is a scheme, we do not know if cylinders coincide with constructible subsets of the topological space $|\sL(\cX)|$ (using the general definition of constructible subset for not-necessarily-noetherian spaces).
\end{remark}

\begin{remark}\label{remarkOrdIsConstructible}
Let $\sI$ be a quasi-coherent ideal sheaf on $\cX$. We get a function $\ord_\sI: |\sL(\cX)| \to \Z \cup \{\infty\}$ in the usual manner: for any $\varphi: \Spec(k'\llbracket t \rrbracket) \to \cX$, we set $\ord_\sI(\varphi)$ to be such that $\sI \cdot k'\llbracket t \rrbracket = (t^{\ord_{\sI}(\varphi)})$, where $(t^\infty)$ means $(0)$. Then just as in the case where $\cX$ is a scheme, it is easy to verify that for any $n \in \Z$ the set $\ord_\sI^{-1}(n)$ is a cylinder. Thus for any cylinder $\cC \subset |\sL(\cX)| \setminus |\sL(\cY)|$, where $\cY$ is the closed substack defined by $\sI$, we have $\ord_\sI: \cC \to \Z$ is constructible.
\end{remark}

\begin{remark}\label{sumOfMeasurableFunctionsIsMeasurable}
If $f,g: |\sL(\cX)| \to \Z$ are constructible functions, then the function $f+g: |\sL(\cX)| \to \Z$ is constructible by \autoref{propositionConstructibleTopologyIsCompact}.
\end{remark}

\begin{definition}
Let $\cC \subset |\sL(\cX)|$ be a cylinder, and let $f: \cC \to \Z$ be a constructible function. We define the \emph{integral} of $\bL^f$ to be
\[
	\int_{\cC} \bL^{f} \diff\mu_\cX = \sum_{n \in \Z} \mu_\cX(f^{-1}(n))\bL^{n},
\]
where we note that this sum is finite by \autoref{propositionConstructibleTopologyIsCompact}.
\end{definition}

\section{Height functions for arcs and jets}

In this section, fix a finite type Artin stack $\cX$ over $k$ and an object $E\in D^-_{\coh}(\cX)$ in the derived category of $\cX$.

\begin{definition}\label{definitionHeightFunctions}
For any $i \in \Z$, any field extension $k'$ of $k$, and any arc $\varphi: \Spec(k'\llbracket t \rrbracket) \to \cX$ (resp. jet $\varphi_n: \Spec(k'[t]/(t^{n+1})) \to \cX$), set
\[
	\het_E^{(i)}(\varphi) = \dim_{k'} \bH^i (L\varphi^*E) \quad (\text{resp. } \het_{n,E}^{(i)}(\varphi_n) = \dim_{k'} \bH^i (L\varphi_n^*E) ).
\]
The assignment $\varphi \mapsto \het_E^{(i)}(\varphi)$ (resp. $\varphi_n \mapsto \het_{n,E}^{(i)}(\varphi_n)$) induces functions
\[
	\het_E^{(i)}: |\sL(\cX)| \to \Z_{\geq 0} \cup \{\infty\} \quad (\text{resp. } \het_{n,E}^{(i)}: |\sL_n(\cX)| \to \Z_{\geq 0} ).
\]
that we refer to as the associated \emph{height functions}.
\end{definition}

The main result of this section is the following theorem, which collects some important properties of these height functions.

\begin{theorem}\label{theoremMainPropertiesOfHeightFunctions}
Let $i \in \Z$ and $n \in \Z_{\geq 0}$.
\begin{enumerate}[(a)]

\item\label{theoremSemicontinuityOfHeightFunction} The function $\het_{n,E}^{(i)}: |\sL_n(\cX)| \to \Z_{\geq 0}$ is upper semi-continuous.

\item\label{theoremRelatingArcHeightFunctionToJetHeightFunction} For any $\varphi \in |\sL(\cX)|$,
\[
	\het_E^{(i)}(\varphi) + \het_E^{(i+1)}(\varphi) \geq \het_{n,E}^{(i)}(\theta_n(\varphi)),
\]
and if this inequality is strict, then
\[
	\het_{n,E}^{(i)}(\theta_n(\varphi)) \geq n+1 \qquad \text{or} \qquad \het^{(i+1)}_E(\varphi) = \infty.
\]
In particular, if $n+1 \geq \het_E^{(i)}(\varphi) + \het_E^{(i+1)}(\varphi)$, then
\[
	\het_E^{(i)}(\varphi) + \het_E^{(i+1)}(\varphi) = \het_{n,E}^{(i)}(\theta_n(\varphi)).
\]
\end{enumerate}
\end{theorem}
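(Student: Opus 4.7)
The plan is to deduce (a) from a universal-jet base-change argument combined with the classical upper semi-continuity for fiberwise cohomology dimensions, and to deduce (b) from a single distinguished triangle together with the structure theorem for finitely generated modules over a discrete valuation ring.

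For (a), consider the universal $n$-jet: let $\pi \colon J := \sL_n(\cX) \times_k \Spec(k[t]/(t^{n+1})) \to \sL_n(\cX)$ and $\mathrm{ev} \colon J \to \cX$ be the projection and the evaluation morphism. Since $\pi$ is finite flat, $G := R\pi_* L\mathrm{ev}^* E$ lies in $D^-_{\coh}(\sL_n(\cX))$. For a $k'$-point $\varphi_n$ of $\sL_n(\cX)$ corresponding to a jet $\tilde\varphi_n \colon \Spec k'[t]/(t^{n+1}) \to \cX$, flat base change along $\pi$ combined with the affineness of the base-changed structure map yields $H^i(L\varphi_n^* G) \cong \bH^i(L\tilde\varphi_n^* E)$, so $\het_{n,E}^{(i)}(\varphi_n) = \dim_{k(\varphi_n)} H^i(L\varphi_n^* G)$. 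It then suffices to prove that $y \mapsto \dim_{k(y)} H^i(Ly^* G)$ is upper semi-continuous on $|\sL_n(\cX)|$ for any $G \in D^-_{\coh}$ on a finite-type Artin stack. After passing to a smooth affine cover where $G$ is quasi-isomorphic to a bounded above complex $P^\bullet$ of finite free modules, the identity $\dim_{k(y)} H^i(Ly^* G) = \mathrm{rk}\, P^i - \mathrm{rk}(d^{i-1}_y) - \mathrm{rk}(d^i_y)$ reduces the claim to the classical lower semi-continuity of matrix ranks.

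For (b), write $R = k'\llbracket t \rrbracket$, $R_n = R/(t^{n+1})$, and $F := L\varphi^* E$, noting that each $H^j(F)$ is a finitely generated $R$-module (lift $\varphi$ to a smooth cover of $\cX$ via formal smoothness of the cover, then descend via faithful flatness of the resulting DVR extension). The two-term free resolution $0 \to R \xrightarrow{t^{n+1}} R \to R_n \to 0$ produces a distinguished triangle $F \xrightarrow{t^{n+1}} F \to F \otimes^L_R R_n \to F[1]$, whose long exact sequence yields the short exact sequence
$$0 \to H^i(F)/t^{n+1}H^i(F) \to \bH^i(L\varphi_n^* E) \to H^{i+1}(F)[t^{n+1}] \to 0.$$
Since $R$ is a DVR, the structure theorem gives $H^j(F) \cong R^{a_j} \oplus \bigoplus_\ell R/(t^{m_{j,\ell}})$, from which $\het_E^{(j)}(\varphi) = \infty$ iff $a_j > 0$ (and otherwise equals $\sum_\ell m_{j,\ell}$), while $\dim_{k'} H^j(F)/t^{n+1} = a_j(n+1) + \sum_\ell \min(m_{j,\ell}, n+1)$ and $\dim_{k'} H^j(F)[t^{n+1}] = \sum_\ell \min(m_{j,\ell}, n+1)$. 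A short case split then settles everything: when $a_i = a_{i+1} = 0$, the inequality reduces to $\sum_\ell \min(m, n+1) \leq \sum_\ell m$, with strict inequality forcing some $m > n+1$ and hence $\het_{n,E}^{(i)} \geq n+1$; if $a_i > 0$, then $\het_{n,E}^{(i)} \geq a_i(n+1) \geq n+1$; and if $a_i = 0 < a_{i+1}$, then $\het_E^{(i+1)}(\varphi) = \infty$. The final claim follows because $n+1 \geq \het_E^{(i)}(\varphi) + \het_E^{(i+1)}(\varphi)$ forces the first case with all $m_{j,\ell} \leq n+1$, so the inequality becomes equality.

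The main technical hurdle lies in part (a): establishing flat base change for $R\pi_*$ over an Artin stack and the representability of $G$ by a bounded above complex of finite free modules smooth-locally. Both reduce to standard facts after smooth-local reduction, but require some care to set up correctly.
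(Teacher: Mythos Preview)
Your proposal is correct and follows essentially the same strategy as the paper: for (a) both arguments push forward the pullback of $E$ along the universal jet and invoke cohomology-and-base-change plus semi-continuity for pseudo-coherent complexes (the paper passes to a smooth cover first, you do so last, and what you call ``flat base change'' is really the proper base change of \cite[Lemma 0CSC]{stacks-project}); for (b) your distinguished triangle $F\xrightarrow{t^{n+1}}F\to F\otimes^L R_n$ yields exactly the two-column degeneration of the paper's $\Tor$ spectral sequence, and your inline structure-theorem computation is the content of the paper's \autoref{lenMfornsufflarge}.
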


We prove \autoref{theoremMainPropertiesOfHeightFunctions} after a preliminary lemma.

\begin{lemma}\label{lenMfornsufflarge}
Let $k'$ be a field extension of $k$, let $R=k'\llbracket t \rrbracket$, and let $M$ be a finitely generated $R$-module. Then for all $n\geq0$,
\[
\dim_{k'}(M)\geq \dim_{k'}(M\otimes_R R/(t^{n+1})) \geq \dim_{k'}\Tor_1^R(M,R/(t^{n+1})),
\]
and the second inequality is an equality if $\dim_{k'}(M) \neq \infty$. Furthermore, if 
\begin{align*}
	&\dim_{k'}(M) > \dim_{k'}(M\otimes_R R/(t^{n+1}))\\
	&\text{ (resp. $\dim_{k'}(M) > \dim_{k'}\Tor_1^R(M,R/(t^{n+1}))$)},
\end{align*}
then 
\begin{align*}
	&\dim_{k'}(M\otimes_R R/(t^{n+1})) \geq n+1\\
	&\text{ (resp. $\dim_{k'}\Tor_1^R(M,R/(t^{n+1})) \geq n+1$ or $\dim_{k'}M = \infty$)}.
\end{align*}
\end{lemma}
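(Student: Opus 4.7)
The plan is to reduce every assertion to an explicit computation via the structure theorem for finitely generated modules over the discrete valuation ring $R = k'\llbracket t \rrbracket$. Any such module has the form $M \cong R^{a} \oplus \bigoplus_{j=1}^{s} R/(t^{b_j})$ for some $a \in \Z_{\geq 0}$ and $b_j \in \Z_{\geq 1}$, and to identify the functors involved I would tensor $M$ with the short exact sequence $0 \to R \xrightarrow{\cdot t^{n+1}} R \to R/(t^{n+1}) \to 0$. This exhibits $M \otimes_R R/(t^{n+1})$ as $M/t^{n+1}M$ and $\Tor_1^R(M, R/(t^{n+1}))$ as the $t^{n+1}$-torsion submodule $M[t^{n+1}]$.

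Computing summand by summand (using that $\Tor_1^R(R,-)=0$ and $R/(t^{b_j}) \otimes_R R/(t^{n+1}) \cong R/(t^{\min(b_j,n+1)})$), one finds $\dim_{k'} M = \infty$ when $a > 0$ and $\dim_{k'} M = \sum_j b_j$ otherwise, while
\[
\dim_{k'}(M/t^{n+1}M) = a(n+1) + \sum_{j} \min(b_j, n+1), \qquad \dim_{k'} M[t^{n+1}] = \sum_{j} \min(b_j, n+1).
\]
The two weak inequalities in the lemma follow at once from $b_j \geq \min(b_j, n+1)$ and $a(n+1) \geq 0$, and the second inequality is an equality precisely when $a = 0$, i.e., precisely when $\dim_{k'} M < \infty$.

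For the strictness clauses I would argue as follows. If the first inequality is strict then either $a \geq 1$ or some $b_j \geq n+2$; in either case the expression $a(n+1) + \sum_{j} \min(b_j, n+1)$ contains a summand equal to $n+1$, giving $\dim_{k'}(M/t^{n+1}M) \geq n+1$. If instead the second inequality is strict, then the case $a = 0$ is excluded (it produces equality), so $a \geq 1$ and hence $\dim_{k'} M = \infty$, which is the second branch of the ``or.'' I do not anticipate any real obstacle: once the structure theorem puts $M$ in standard form, everything collapses to the inequality $b_j \geq \min(b_j, n+1)$, and the only care required is the bookkeeping of the $\infty$-valued contributions coming from the free summand $R^{a}$.
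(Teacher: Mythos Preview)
Your approach is essentially the same as the paper's: both invoke the structure theorem over the PID $R$ and reduce everything to the identity $\dim_{k'}\bigl(R/(t^{e}) \otimes_R R/(t^{n+1})\bigr) = \dim_{k'}\Tor_1^R\bigl(R/(t^e), R/(t^{n+1})\bigr) = \min(e, n+1)$, with the free summand handled separately.

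The only slip is in your last sentence. The ``resp.'' hypothesis in the lemma is $\dim_{k'} M > \dim_{k'}\Tor_1^R(M, R/(t^{n+1}))$, not strictness of the second inequality in the displayed chain; so the case $a = 0$ is \emph{not} automatically excluded. The fix is immediate from your own computations: when $a = 0$ the hypothesis reads $\sum_j b_j > \sum_j \min(b_j, n+1)$, which forces some $b_j > n+1$, and then $\dim_{k'}\Tor_1^R(M, R/(t^{n+1})) \geq \min(b_j, n+1) = n+1$, giving the first branch of the ``or.''
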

\begin{proof}
By the structure theorem for modules over a PID, $M\cong R^r\oplus\bigoplus_{i=1}^m R/(t^{e_i})$ for $e_i\geq1$. Since $\Tor^R_1(R, R/(t^{n+1})) = 0$, we reduce immediately to the case where $M=R/(t^e)$ for some $e \geq 1$. Then from the short exact sequence
\[
0\to R\stackrel{\cdot t^e}{\to} R\to M\to 0,
\]
we obtain a long exact sequence
\[
0\to \Tor_1^R(M,R/(t^{n+1}))\to R/(t^{n+1})\stackrel{\cdot t^e}{\to} R/(t^{n+1})\to M\otimes_R R/(t^{n+1})\to 0,
\]
and so $\dim_k(M\otimes_R R/(t^{n+1}))=\dim_k\Tor_1^R(M,R/(t^{n+1}))$. Since $M\otimes_R R/(t^{n+1})=R/(t^e,t^{n+1})=R/(t^{\min(e,n+1)})$, we see $\dim_{k'}(M)\geq \dim_{k'}(M\otimes_R R/(t^{n+1}))$, which finishes the proof of the first statement. Furthermore, if $e = \dim_{k'}M > \dim_{k'}(M\otimes_R R/(t^{n+1})) = \min(e, n+1)$, we have $\dim_{k'}(M\otimes_R R/(t^{n+1})) = \min(e,n+1) = n+1$, and the remainder of the lemma follows.
\end{proof}

\begin{proof}[{Proof of \autoref{theoremMainPropertiesOfHeightFunctions}}]
We first prove part (\ref{theoremSemicontinuityOfHeightFunction}). Let $V \to \sL_n(\cX)$ be a finite type smooth cover by a scheme, and consider the composition
\[
	\widetilde{\het}: V \to |\sL_n(\cX)| \xrightarrow{\het_{n,E}^{(i)}} \Z_{\geq 0}.
\]
Because $V \to |\sL_n(\cX)|$ is surjective and open, to show (\ref{theoremSemicontinuityOfHeightFunction}) it is sufficient to prove that $\widetilde{\het}$ is upper semi-continous. The map $V \to \sL_n(\cX)$ corresponds to a map
\[
	\Phi: V \times_k \Spec(k[t]/(t^{n+1})) \to \cX.
\]
Let $\pi: V \times_k \Spec(k[t]/(t^{n+1})) \to V$ be the projection, and for any $v \in V$, let $Y_v$ be the fiber of $\pi$ over $v$ and let $\iota_v: Y_v \to V \times_k \Spec(k[t]/(t^{n+1}))$ be the inclusion. Then for all $v \in V$,
\[
	\widetilde{\het}(v) = \dim_{k(v)}\bH^i(L\iota_v^*L\Phi^*E),
\]
where $k(v)$ is the residue field of $v$. Set $F = R\pi_* L\Phi^*E$. Because $\pi$ is flat and proper, cohomology and base change (see e.g., \cite[Lemma 0CSC]{stacks-project}) implies that $F$ is pseudo-coherent (i.e.~$F\in \in D^-_{\coh}(V)$) and that for all $v \in V$,
\[
	\bH^i(L\iota_v^*L\Phi^*E) \cong H^i(F \otimes_{\cO_V}^{L} k(v)).
\]
Part (\ref{theoremSemicontinuityOfHeightFunction}) then follows from the fact that $v \mapsto \dim_{k(v)}H^i(F \otimes_{\cO_V}^{L} k(v))$ is upper semi-continous, see e.g., \cite[Lemma 0BDI]{stacks-project}.

We now prove part (\ref{theoremRelatingArcHeightFunctionToJetHeightFunction}). Let $R = k'\llbracket t \rrbracket$, where $k'$ is a field extension of $k$ over which $\varphi$ is defined. We have a spectral sequence
\[
E_2^{pq}= \Tor^R_{-p}(L^q\varphi^* E, R/(t^{n+1}))\Rightarrow L^{p+q}\varphi_n^*E
\]
see, e.g., \cite[Equation (3.10)]{Huybrechts2006}. Since $R$ is a PID, all $E_2^{pq}$ terms vanish except for $p\in\{-1,0\}$. Thus,
\begin{align*}
	\het_{n,E}^{(i)}(\theta_n(\varphi)) &= \dim_{k'}L^{i}\varphi_n^*E\\
	 &= \dim_{k'}(L^i\varphi^* E \otimes_R R/(t^{n+1})) + \dim_{k'}(\Tor^R_1(L^{i+1}\varphi^* E, R/(t^{n+1})),
\end{align*}
and we are done by \autoref{lenMfornsufflarge}. 
\end{proof}

The following corollary gives important examples of constructible functions.

\begin{corollary}\label{SumOfHeightFunctionIsMeasurable}
Let $i \in \Z$, let $f: |\sL(\cX)| \to \Z \cup \{\infty\}$ be the sum of height functions $\het_E^{(i)} + \het_E^{(i+1)}$, and let $\cC \subset |\sL(\cX)|$ be a cylinder such that $\cC \cap (\het^{(i+1)}_E)^{-1}(\infty) = \emptyset$. Then for any $n \in \Z$, the set $f^{-1}(n) \cap \cC$ is a cylinder.
\end{corollary}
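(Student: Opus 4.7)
The plan is to approximate the arc-level sum $f = \het_E^{(i)} + \het_E^{(i+1)}$ by the jet-level height function $\het_{m,E}^{(i)}$ at a sufficiently high truncation $m$, and then cut out $f^{-1}(n) \cap \cC$ as the pullback of a constructible subset of $|\sL_m(\cX)|$.

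The key intermediate claim is: for any $\varphi \in \cC$ and any $m \in \Z_{\geq 0}$,
\[
	\het_{m,E}^{(i)}(\theta_m(\varphi)) \leq m \iff f(\varphi) \leq m,
\]
and in that case $\het_{m,E}^{(i)}(\theta_m(\varphi)) = f(\varphi)$. The ``$\Leftarrow$'' direction follows immediately from the ``in particular'' clause of \autoref{theoremMainPropertiesOfHeightFunctions}(\ref{theoremRelatingArcHeightFunctionToJetHeightFunction}), since then $m+1 \geq f(\varphi)$. For ``$\Rightarrow$'', one always has $f(\varphi) \geq \het_{m,E}^{(i)}(\theta_m(\varphi))$ by \autoref{theoremMainPropertiesOfHeightFunctions}(\ref{theoremRelatingArcHeightFunctionToJetHeightFunction}); were the inequality strict, the same part of the theorem would force $\het_{m,E}^{(i)}(\theta_m(\varphi)) \geq m+1$ (the alternative $\het_E^{(i+1)}(\varphi) = \infty$ being excluded by the hypothesis on $\cC$), contradicting the assumption.

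With this equivalence in hand, fix $n \in \Z$; the case $n < 0$ is trivial, so assume $n \geq 0$. Choose $m \geq n$ large enough that $\cC = \theta_m^{-1}(\cC_m)$ for some constructible $\cC_m \subset |\sL_m(\cX)|$. By the upper semi-continuity in \autoref{theoremMainPropertiesOfHeightFunctions}(\ref{theoremSemicontinuityOfHeightFunction}), the level set $(\het_{m,E}^{(i)})^{-1}(n)$ is a difference of two open subsets of $|\sL_m(\cX)|$, hence constructible, so $B_m := \cC_m \cap (\het_{m,E}^{(i)})^{-1}(n)$ is constructible. The equivalence then gives $f^{-1}(n) \cap \cC = \theta_m^{-1}(B_m)$, a cylinder.

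I expect the main obstacle to be the intermediate claim: the hypothesis $\cC \cap (\het_E^{(i+1)})^{-1}(\infty) = \emptyset$ is precisely what prevents a large strict gap between arc-level and jet-level heights, and without it the truncations $\het_{m,E}^{(i)}(\theta_m(\varphi))$ need not faithfully detect $f(\varphi)$ at any finite level $m$. Once the claim is established, the remainder is a formal manipulation of constructible sets combined with the semi-continuity already proved.
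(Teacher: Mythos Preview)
Your proof is correct and follows essentially the same approach as the paper: both arguments use \autoref{theoremMainPropertiesOfHeightFunctions}(\ref{theoremRelatingArcHeightFunctionToJetHeightFunction}) to identify the arc-level sum $f$ with a jet-level height at a suitable truncation, and then invoke the upper semi-continuity in \autoref{theoremMainPropertiesOfHeightFunctions}(\ref{theoremSemicontinuityOfHeightFunction}) to conclude constructibility. The paper is slightly more economical, showing directly that $f^{-1}(\Z_{\geq n}) \cap \cC = (\het_{n-1,E}^{(i)} \circ \theta_{n-1})^{-1}(\Z_{\geq n}) \cap \cC$ at truncation level $n-1$ (so no need to choose $m$ large relative to $\cC$), but your intermediate claim is equivalent and the difference is purely cosmetic.
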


\begin{proof}
By \autoref{theoremMainPropertiesOfHeightFunctions}(\ref{theoremSemicontinuityOfHeightFunction}), the set $(\het_{n-1,E}^{(i)} \circ \theta_{n-1})^{-1}(\Z_{\geq n}) \subset |\sL(\cX)|$ is a cylinder, so it is sufficient to prove that
\[
	f^{-1}(\Z_{\geq n}) \cap \cC= (\het_{n-1,E}^{(i)} \circ \theta_{n-1})^{-1}(\Z_{\geq n}) \cap \cC,
\]
but this follows easily from \autoref{theoremMainPropertiesOfHeightFunctions}(\ref{theoremRelatingArcHeightFunctionToJetHeightFunction}).
\end{proof}

\begin{remark}\label{heightFunctionRemarkWhenNotInfinity}
If $\cU$ is an open substack of $\cX$ such that $E$ restricts to 0 on $\cU$, then each $\het_E^{(i)}$ is never $\infty$ on $|\sL(\cX)| \setminus |\sL(\cX \setminus \cU)|$. Thus \autoref{SumOfHeightFunctionIsMeasurable} implies that $\het_E^{(i)} + \het_E^{(i+1)}: \cC \to \Z$ is a constructible function for any cylinder $\cC \subset |\sL(\cX)| \setminus |\sL(\cX \setminus \cU)|$.
\end{remark}

We may now prove \autoref{theoremChangeOfVariables}(\ref{theoremMeasurablePartOfCOV}).

\begin{proof}[Proof of \autoref{theoremChangeOfVariables}(\ref{theoremMeasurablePartOfCOV})]
Using the notation of \autoref{theoremChangeOfVariables}, we have $L_{\cX/Y}$ and $L\sigma^*L_{\cI_\cX/\cX}$ live in $D^-_{\coh}(\cX)$. Because $H^iL_{\cX/Y} = 0$ for $i > 1$ and $H^i L\sigma^*L_{\cI_\cX/\cX}= 0$ for $i > 0$ (because $\cI_\cX \to \cX$ is representable), we have that  $\het_{L_{\cX/Y}}^{(i)}$ is identically 0 for $i > 1$ and $\het_{L\sigma^*L_{\cI_\cX/\cX}}^{(i)}$ is identically 0 for $i > 0$. Thus, also noting that $L_{\cX/Y}$ and $L\sigma^*L_{\cI_\cX/\cX}$ restrict to 0 on $\cU$, \autoref{heightFunctionRemarkWhenNotInfinity} implies that $\het_{L_{\cX/Y}}^{(0)} + \het_{L_{\cX/Y}}^{(1)}$, $\het_{L_{\cX/Y}}^{(1)}$, and $\het_{L\sigma^*L_{\cI_\cX/\cX}}^{(0)}$ are constructible functions on $\cC$. We are done by \autoref{sumOfMeasurableFunctionsIsMeasurable}.
\end{proof}

We end this section by proving the following lemma, which will allow us to relate our height functions to the various deformation spaces that will appear throughout the proof of \autoref{theoremChangeOfVariables}(\ref{theoremPartChangeOfVariables}).

\begin{lemma}\label{replacingExtWithH}
Let $i \in \Z$, let $n \geq m \in \Z_{\geq 0}$, let $k'$ be a field, let $\iota: \Spec(k'[t]/(t^m)) \to \Spec(k'[t]/(t^n))$ be the truncation map, and let $F$ be a psuedo-coherent object in the derived category of $\Spec(k'[t]/(t^{n}))$. Then
\[
	\dim_{k'}\Ext^i_{k'[t]/(t^{n})}(F, (t^n)/(t^{n+m})) = \dim_{k'} \bH^{-i}(L\iota^*F).
\]
\end{lemma}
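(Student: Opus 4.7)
The plan is to build a term-by-term isomorphism of complexes that computes both sides of the asserted equality. Concretely, set $A = k'[t]/(t^n)$ and $B = k'[t]/(t^m)$, and represent $F$ by a bounded-above complex $P^\bullet$ of finitely generated free $A$-modules (possible by pseudo-coherence of $F$ together with Noetherianness of $A$). Then the left-hand side of the claim is $\dim_{k'} H^i(\Hom_A^\bullet(P^\bullet, (t^n)/(t^{n+m})))$ and the right-hand side is $\dim_{k'} H^{-i}(P^\bullet \otimes_A B)$, and my goal is to identify these two complexes up to $k'$-linear duality.

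The first step is to observe that $(t^n)/(t^{n+m}) \cong \Hom_{k'}(B, k')$ as $A$-modules. Both are cyclic $A$-modules of $k'$-dimension $m$ whose annihilator is exactly $(t^m)\subset A$, so both are isomorphic to $A/(t^m)$: the former is generated by the class of $t^n$ (whose annihilator is $(t^m)$ precisely because $n \geq m$), while the latter, being the $k'$-linear dual of $B$, is generated by the functional dual to $t^{m-1} \in B$, reflecting the fact that $B$ is a Frobenius $k'$-algebra. Fix any such identification.

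The second step is to invoke the standard tensor-hom adjunction
\[
	\Hom_A(N, \Hom_{k'}(B, k')) \xrightarrow{\ \cong\ } \Hom_{k'}(N \otimes_A B, k'), \qquad f \mapsto \bigl(n \otimes b \mapsto f(n)(b)\bigr),
\]
which is natural in the $A$-module $N$. Combining it with the module isomorphism from step 1 and applying termwise to $N = P^i$, naturality assembles these into an isomorphism of complexes
\[
	\Hom_A^\bullet(P^\bullet, (t^n)/(t^{n+m})) \;\cong\; \Hom_{k'}(P^\bullet \otimes_A B, k').
\]

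Finally, take $i$-th cohomology. The left side gives $\Ext^i_A(F, (t^n)/(t^{n+m}))$, since $P^\bullet$ resolves $F$ in $D^-(A)$. On the right side, exactness of $\Hom_{k'}(-,k')$ (as $k'$ is a field) yields $\Hom_{k'}(H^{-i}(P^\bullet \otimes_A B), k') = \Hom_{k'}(\bH^{-i}(L\iota^*F), k')$. Pseudo-coherence of $F$ ensures $\bH^{-i}(L\iota^*F)$ is finitely generated over $B$, hence finite-dimensional over $k'$, so $k'$-linear duality preserves dimension and the required equality drops out. The only real content is the recognition in step 1 that $(t^n)/(t^{n+m})$ is the $k'$-linear (Matlis) dual of $B$; after that, the argument is pure tensor-hom formalism plus exactness of duality over a field.
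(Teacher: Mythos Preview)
Your proof is correct and follows essentially the same strategy as the paper's: resolve $F$ by a bounded-above complex of finite free $A$-modules, identify $(t^n)/(t^{n+m})$ with a suitable dualizing module, apply a tensor--hom adjunction, and pass cohomology through an exact duality functor.

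The one genuine difference is in how the duality is packaged. The paper identifies $(t^n)/(t^{n+m}) \cong B$ as $A$-modules and then uses the adjunction $\Hom_A(-,B)\cong\Hom_B(-\otimes_A B,B)$; it must then invoke Baer's criterion to see that $B$ is self-injective (so that $\Hom_B(-,B)$ commutes with cohomology) and the structure theorem for finitely generated modules over $k'\llbracket t\rrbracket$ to see that $\Hom_B(M,B)\cong M$. You instead identify $(t^n)/(t^{n+m}) \cong \Hom_{k'}(B,k')$ and dualize over the field $k'$, where exactness and dimension-preservation are automatic. Your route is slightly more elementary, avoiding both auxiliary facts; the paper's route has the minor advantage of producing an actual module isomorphism $\Ext^i\cong\bH^{-i}$ rather than just equality of $k'$-dimensions, though only the dimension equality is used downstream.
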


\begin{proof}
Let $F^\bullet$ be a complex of projectives representing the object $F$. Then
\begin{align*}
	\Ext^i_{k'[t]/(t^{n})}(F, &(t^n)/(t^{n+m})) \cong H^i \Hom_{k'[t]/(t^{n})}( F^\bullet, (t^n)/(t^{n+m}))\\
	&\cong H^i\Hom_{k'[t]/(t^{n})}( F^\bullet, k'[t]/(t^m) )\\
	&\cong H^i\Hom_{k'[t]/(t^{m})}( F^\bullet \otimes_{k'[t]/(t^n)} k'[t]/(t^m), k'[t]/(t^m) ),
\end{align*}
where the second isomorphism is due to the fact that $k'[t]/(t^m) \cong (t^n)/(t^{n+m})$ as $k'[t]/(t^{n})$-modules. Baer's criterion and the fact that $k'\llbracket t \rrbracket$ is a principal ideal domain implies that $k'[t]/(t^m)$ is an injective $k'[t]/(t^m)$-module, so
\begin{align*}
	H^i\Hom_{k'[t]/(t^{m})}&( F^\bullet \otimes_{k'[t]/(t^n)} k'[t]/(t^m), k'[t]/(t^m) )\\
	&\cong \Hom_{k'[t]/(t^{m})}( H^{-i}(F^\bullet \otimes_{k'[t]/(t^n)} k'[t]/(t^m)), k'[t]/(t^m) )\\
	&\cong H^{-i}(F^\bullet \otimes_{k'[t]/(t^n)} k'[t]/(t^m))\\
	&\cong L^{-i} \iota^* F,
\end{align*}
where the second isomorphism is due to the fact that  $\Hom_{k'[t]/(t^{m})}( M, k'[t]/(t^m) ) \cong M$ for any finitely generated $k'[t]/(t^{m})$-module $M$ as a straightforward consequence of the structure theorem for finitely generated modules over $k'\llbracket t \rrbracket$. The lemma follows from the above isomorphisms and the fact that $\Spec(k'[t]/(t^m))$ is affine.
\end{proof}

\section{Stabilizers of jet stacks}

Throughout this section, let $\cX$ be a finite type Artin stack over $k$ such that the inertia map $\cI_\cX \to \cX$ is separated, and let $\sigma: \cX \to \cI_\cX$ denote the identity section of $\cI_\cX \to \cX$. We will use the fact that since $\cI_\cX \to \cX$ is separated, $\sigma$ is a closed immersion.

\begin{theorem}\label{theoremStabilizersOfJets}
Let $\sJ$ denote the ideal sheaf on $\cI_\cX$ defining the closed substack $\cX \xrightarrow{\sigma} \cI_\cX$.
\begin{enumerate}[(a)]

\item\label{existenceOfIdealControllingStabilizers} If $\cU$ is an open substack of $\cX$ that is isomorphic to an algebraic space, then there exists a quasi-coherent ideal sheaf $\sI$ on $\cX$ such that
\begin{itemize}

\item $\sI\cO_{\cI_\cX} \cdot \sJ = 0$, and

\item the closed substack of $\cX$ defined by $\sI$ has underlying topological space equal to $|\cX| \setminus |\cU|$.

\end{itemize}

\item\label{computationOfStabliizersOfJets} Let $\sI$ be a quasi-coherent ideal sheaf on $\cX$ such that $\sI \cO_{\cI_\cX} \cdot \sJ = 0$, let $k'$ be a field extension of $k$, and let $\varphi \in \sL(\cX)(k')$. Then for all 
\[
	n \geq \max(\ord_\sI(\varphi), 2\max(\ord_{\sI}(\varphi), \het^{(0)}_{L\sigma^*L_{\cI/\cX}}(\varphi)) - 1),
\]
the stabilizer of $\theta_n(\varphi) \in \sL_n(\cX)(k')$ is isomorphic as an algebraic group to
\[
	\bG_{a,k'}^{\het^{(0)}_{L\sigma^*L_{\cI_\cX/\cX}}(\varphi)}.
\]

\end{enumerate}
\end{theorem}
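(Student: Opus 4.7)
The plan is to handle part (a) via a support argument and part (b) via a direct Hopf-algebraic computation of the relevant Weil restriction.

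For part (a), since $\cU$ is an algebraic space, its inertia map $\cI_\cU \to \cU$ is an isomorphism, so $\sigma$ restricts to an isomorphism over $\cU$ and $\sJ|_{\cI_\cU} = 0$. Thus the coherent ideal $\sJ$ is supported on the preimage of $|\cX| \setminus |\cU|$. Letting $\sI_0$ be the ideal sheaf of $(\cX \setminus \cU)_{\red}$, the pullback $\sI_0 \cO_{\cI_\cX}$ has the same topological support as $\sJ$. Working locally on a smooth atlas (reducing to a Noetherian ring) and gluing, the coherent Nullstellensatz produces some $N$ with $\sI_0^N \cO_{\cI_\cX} \cdot \sJ = 0$; take $\sI := \sI_0^N$.

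For part (b), I would first translate to a direct computation with the group scheme $\cG := \varphi^* \cI_\cX$ over $R := k' \llbracket t \rrbracket$. Set $A_n := R/(t^{n+1})$, $e := \ord_\sI(\varphi)$, and let $\sI_{\sigma_\varphi} \subset \cO_\cG$ be the ideal of the (closed) identity section $\sigma_\varphi$. Pulling back $\sI \cO_{\cI_\cX} \cdot \sJ = 0$ along $\varphi$ gives $t^e \cdot \sI_{\sigma_\varphi} = 0$, and base change for cotangent complexes identifies $V := \sigma_\varphi^*(\sI_{\sigma_\varphi}/\sI_{\sigma_\varphi}^2)$ with $\bH^0(L\varphi^* L\sigma^* L_{\cI_\cX/\cX})$, so $V$ is a finitely generated $R/(t^e)$-module of $k'$-dimension $d = \het^{(0)}_{L\sigma^*L_{\cI_\cX/\cX}}(\varphi)$. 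By the functorial definition of the jet stack, the stabilizer of $\theta_n(\varphi)$ is the Weil restriction $\mathrm{Res}_{A_n/k'}(\cG \otimes_R A_n)$, whose $B$-points for a $k'$-algebra $B$ are the $(A_n \otimes_{k'} B)$-algebra homomorphisms $\phi : \cO_\cG \otimes_R (A_n \otimes_{k'} B) \to A_n \otimes_{k'} B$.

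Next I would analyze such a $\phi$ using the augmentation splitting $\cO_\cG = R \oplus \sI_{\sigma_\varphi}$: $\phi$ is determined by $\phi|_{\sI_{\sigma_\varphi}}$, subject to multiplicativity $\phi(s_1 s_2) = \phi(s_1)\phi(s_2)$. Since $t^e \sI_{\sigma_\varphi} = 0$, the image of $\phi$ lies in $\mathrm{Ann}_{A_n \otimes B}(t^e) = (t^{n+1-e})(A_n \otimes_{k'} B)$, and the hypothesis on $n$ gives both $n+1 \geq e$ and $2(n+1-e) \geq n+1$. Therefore $\phi(s_1)\phi(s_2) \in (t^{2(n+1-e)})(A_n \otimes B) = 0$, which forces $\phi|_{\sI_{\sigma_\varphi}^2} = 0$; hence $\phi$ factors through $V$, and then multiplicativity is automatic. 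Functorially in $B$, this identifies $\mathrm{Res}_{A_n/k'}(\cG \otimes A_n)$ with the vector group on the $k'$-vector space $\Hom_{A_n}(V, A_n)$, whose dimension is $d$ by the structure theorem for $R/(t^e)$-modules. For the group structure, the comultiplication satisfies $\mu^*(s) = s \otimes 1 + 1 \otimes s + \Delta(s)$ with $\Delta(s) \in \sI_{\sigma_\varphi} \otimes_R \sI_{\sigma_\varphi}$, yielding $(\phi_1 \cdot \phi_2)(s) = \phi_1(s) + \phi_2(s) + (\phi_1 \otimes \phi_2)(\Delta(s))$; the same torsion bound kills the third term, so the group law is addition and $\mathrm{Res}_{A_n/k'}(\cG \otimes A_n) \cong \bG_{a,k'}^d$. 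The hard part will be this Hopf-algebraic bookkeeping: ensuring naturality in $B$ (so we obtain a scheme isomorphism, not merely a bijection on $k'$-points), and correctly interpreting $\cO_\cG$ locally on an atlas of $\cX$, since $\cG \to \Spec R$ is only required to be representable by an algebraic space.
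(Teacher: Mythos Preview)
Your approach to part (a) matches the paper's: both take a power of the reduced ideal of $\cX\setminus\cU$ and invoke a Noetherian nilpotence argument (the paper packages this as \autoref{ClosedSubstackIsoOverComplementOfOtherClosedSubstackIdealLemma}).

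For part (b), your route is genuinely different. The paper first proves a truncation statement (\autoref{autosEventuallyTruncateToSection}): any $A$-valued automorphism of $\theta_n(\varphi)$, when truncated to level $n-\ord_\sI(\varphi)$, equals the identity section. Setting $r=\max(\ord_\sI(\varphi),\het^{(0)}(\varphi))$, this exhibits the stabilizer as the space of deformations of $\sigma\circ\varphi_{n-r}$ along the square-zero thickening $\Spec(A[t]/(t^{n-r+1}))\hookrightarrow\Spec(A[t]/(t^{n+1}))$, which deformation theory identifies with $\Ext^0(L\varphi_{n-r}^*L\sigma^*L_{\cI_\cX/\cX},(t^{n-r+1})/(t^{n+1}))$; the group law is addition by a general Lie-algebra lemma (\autoref{LieAlgArgument}), and the dimension is computed via \autoref{replacingExtWithH} and \autoref{theoremMainPropertiesOfHeightFunctions}(\ref{theoremRelatingArcHeightFunctionToJetHeightFunction}). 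Your argument instead shows directly that every section factors through the \emph{first infinitesimal neighbourhood} $\Spec(R\oplus V)$ of the identity, reducing everything to $\Hom_R(V,A_n\otimes_{k'}B)$ with $V=\sI_{\sigma_\varphi}/\sI_{\sigma_\varphi}^2$. This is more elementary---no $\Ext$ groups, no spectral sequence comparing $\het^{(0)}_{r-1}$ to $\het^{(0)}$---and in fact yields the conclusion already for $n\geq\max(\ord_\sI(\varphi),2\ord_\sI(\varphi)-1)$, a sharper bound than the one stated. Two small remarks: your identification of $V$ with $\bH^0(L\varphi^*L\sigma^*L_{\cI_\cX/\cX})$ is correct but is really the statement that $\Omega^1$ (not the full cotangent complex) commutes with base change, together with the section identity $\sigma_\varphi^*\Omega^1_{\cG/R}\cong\sI_{\sigma_\varphi}/\sI_{\sigma_\varphi}^2$; and your worry about affineness of $\cG$ is not serious, since the key steps (the inclusion $s^*(\sI_{\sigma_\varphi})\subset\mathrm{Ann}(t^e)$ and the vanishing of products) are statements about the ideal sheaf $\sI_{\sigma_\varphi}$ that can be checked locally on $\cG$, after which all sections visibly factor through the affine closed subscheme $\Spec_R(R\oplus V)$.
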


We begin with a lemma that will be used to prove \autoref{theoremStabilizersOfJets}(\ref{existenceOfIdealControllingStabilizers}).

\begin{lemma}
\label{ClosedSubstackIsoOverComplementOfOtherClosedSubstackIdealLemma}
Let $\cY$ be a noetherian Artin stack, and let $\cZ, \cW$ be closed substacks of $\cY$. Let $\sI$ and $\sJ$ be the ideal sheaves on $\cY$ defining $\cZ$ and $\cW$, respectively, and let $\cU \hookrightarrow \cY$ be the open substack such that $|\cU| = |\cY| \setminus |\cZ|$. If $\cW \times_\cY \cU \to \cU$ is an isomorphism, then there exists some $r \in \Z_{>0}$ such that
\[
	\sI^r \cdot \sJ = 0.
\]
\end{lemma}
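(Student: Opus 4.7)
The plan is to reduce the statement to the affine noetherian case via smooth descent, where it becomes a standard fact about finitely generated modules supported on a closed subscheme.

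First, I would unpack the geometric hypothesis. The closed substack $\cW \times_\cY \cU \hookrightarrow \cU$ is cut out by the ideal $\sJ\cO_\cU$, so the assumption that this inclusion is an isomorphism is equivalent to $\sJ\cO_\cU = 0$. In other words, the coherent ideal sheaf $\sJ$ on $\cY$ is scheme-theoretically supported on $\cZ$; in particular its set-theoretic support is contained in $|\cZ|$.

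Next, since $\cY$ is noetherian, I would pick a surjective smooth morphism $V \to \cY$ with $V$ a finite disjoint union of noetherian affine schemes $\Spec(A_i)$. The hypothesis pulls back: writing $I_i, J_i \subset A_i$ for the ideals corresponding to $\sI\cO_V$ and $\sJ\cO_V$, flatness of the cover preserves the support condition, so $(J_i)_f = 0$ for every $f \in I_i$. Since $J_i$ is a finitely generated $A_i$-module, every element of $J_i$ is annihilated by a power of every $f \in I_i$. Choosing a finite generating set $f_1, \dots, f_m$ of $I_i$ with exponents $N_1, \dots, N_m$ such that $f_j^{N_j} J_i = 0$, a pigeonhole argument on monomials of total degree $r_i := N_1 + \cdots + N_m$ shows that $I_i^{r_i} J_i = 0$. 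Taking $r = \max_i r_i$, which is finite because the cover has finitely many components, yields $(\sI\cO_V)^r \cdot (\sJ\cO_V) = 0$.

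Finally, the morphism $V \to \cY$ is faithfully flat, so the vanishing descends from $V$ to $\sI^r \sJ = 0$ on $\cY$. The only step requiring any real care is obtaining a \emph{uniform} exponent $r$ across the atlas; this is the reason the noetherian (and implicit quasi-compactness) hypothesis on $\cY$ is essential, since otherwise the exponents $r_i$ arising chart by chart could be unbounded.
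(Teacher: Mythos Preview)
Your proof is correct and follows essentially the same route as the paper: reduce to a noetherian affine via a smooth cover, observe that the hypothesis forces $J_f = 0$ for each $f \in I$, and then use noetherianness to extract a uniform power $r$ with $I^r J = 0$. The paper phrases the affine step slightly differently (it verifies directly that $J \subset \ker(A \to A_f)$ rather than invoking ``flatness preserves the support condition''), and it leaves the pigeonhole step implicit where you spell it out, but the argument is the same.
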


\begin{proof}
For any $r \in \Z_{>0}$, let $\cV_r \hookrightarrow \cY$ denote the closed substack defined by $\sI^r \cdot \sJ$. It is sufficient to prove that there exists some $r \in \Z_{>0}$ such that $\cV_r \hookrightarrow \cY$ is an isomorphism. Let $\Spec(A) \to \cY$ be a smooth cover. Then it is sufficient to show that there exists some $r \in \Z_{>0}$ such that $\cV_r \times_{\cY} \Spec(A) \hookrightarrow \Spec(A)$ is an isomorphism. Let $I, J \subset A$ be the global sections of the ideal sheaves $\sI \cO_{\Spec(A)}, \sJ \cO_{\Spec(A)}$, respectively. Because $\cV_r \times_{\cY} \Spec(A) \hookrightarrow \Spec(A)$ is the closed subscheme defined by $I^r \cdot J$, it is sufficient to prove that there exists some $r \in \Z_{>0}$ such that $I^r \cdot J = 0$. Because $A$ is noetherian, it is sufficient to show that for any $f \in I$, every element of $J$ is $f$-torsion.

Let $f \in I$, and let $A_f$ be the ring obtained from $A$ by inverting $f$. Then the open subscheme $\Spec(A_f) \hookrightarrow \Spec(A)$ is contained in the open subscheme $\cU \times_\cY \Spec(A) \hookrightarrow \Spec(A)$. Set $B = A_f \otimes_A (A/J)$.  Then $\Spec(B) \to \Spec(A_f)$ is the base change of $\cW \to \cY$ along the composition $\Spec(A_f) \hookrightarrow \cU \times_{\cY} \Spec(A) \to \cU \hookrightarrow \cY$, so $A_f \to B$ is an isomorphism. Therefore
\[
	J = \ker(A \to A/J) \subset \ker(A \to B) = \ker(A \to A_f),
\]
so every element of $J$ is $f$-torsion, and the proof is complete.
\end{proof}

We now prove \autoref{theoremStabilizersOfJets}(\ref{existenceOfIdealControllingStabilizers}).

\begin{proof}[Proof of \autoref{theoremStabilizersOfJets}(\ref{existenceOfIdealControllingStabilizers})]
Let $\sI'$ be the ideal sheaf on $\cX$ that defines the reduced substack with underlying topological space equal to $|\cX| \setminus |\cU|$, and let $\cZ \hookrightarrow \cI_\cX$ be the closed substack defined by $\sI' \cO_{\cI_\cX}$. Then $|\cI_\cU| = |\cI_\cX| \setminus |\cZ|$. Let $\cU \to \cI_\cU \to \cU$ be the base change of $\cX \xrightarrow{\sigma} \cI_\cX \to \cX$ along $\cU \hookrightarrow \cX$. Then $\cU \to \cI_\cU$ is a section of the isomorphism $\cI_\cU \xrightarrow{\sim} \cU$ (this is an isomorphism because $\cU$ is an algebraic space), so $\cU \to \cI_\cU$ is an isomorphism. Thus $\cX \xrightarrow{\sigma} \cI_\cX$ is a closed immersion such that $\cX \times_{\cI_{\cX}} \cI_\cU \to \cI_\cU$ is an isomorphism. Therefore \autoref{ClosedSubstackIsoOverComplementOfOtherClosedSubstackIdealLemma} implies that there exists some $r \in \Z_{>0}$ such that
\[
	(\sI' \cO_{\cI_\cX})^r \cdot \sJ = 0.
\]	
Setting $\sI = (\sI')^r$, we get
\[
	\sI \cO_{\cI_\cX} \cdot \sJ = 0,
\]
and the closed substack of $\cX$ defined by $\sI$ has underlying topological space equal to $|\cX| \setminus |\cU|$.
\end{proof}

The next lemmas will be used in proving \autoref{theoremStabilizersOfJets}(\ref{computationOfStabliizersOfJets}).

\begin{lemma}
\label{lemmaAboutFlatnessAndTorsionAndTruncations}
Let $k'$ be a field, let $n \in \Z_{\geq 0}$, let $A$ be a flat $k'[t]/(t^{n+1})$ algebra, and let $a \in A$. If $m \leq n$ is such that $t^ma = 0$, then $a \in (t^{n-m+1})A$.
\end{lemma}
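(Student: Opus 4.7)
The plan is to reduce this statement to a computation of annihilators inside $R := k'[t]/(t^{n+1})$ and then transfer that computation to $A$ using flatness. First, I would note that in $R$ itself, the annihilator of $t^m$ is exactly the ideal $(t^{n-m+1})$: indeed, for $x \in k'[t]$, the relation $t^m x \equiv 0 \pmod{t^{n+1}}$ is equivalent to $t^{n+1} \mid t^m x$, i.e., $t^{n-m+1} \mid x$. Equivalently, the sequence of $R$-modules
\[
0 \to (t^{n-m+1}) \hookrightarrow R \xrightarrow{\cdot t^m} R
\]
is exact.

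Next I would tensor this exact sequence with $A$ over $R$. Since $A$ is $R$-flat, the resulting sequence
\[
0 \to (t^{n-m+1}) \otimes_R A \to A \xrightarrow{\cdot t^m} A
\]
is still exact. The image of the leftmost map in $A$ is precisely the ideal $(t^{n-m+1})A$, so the kernel of multiplication by $t^m$ on $A$ is exactly $(t^{n-m+1})A$. Thus any $a \in A$ with $t^m a = 0$ lies in $(t^{n-m+1})A$, which is the claim.

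There is essentially no obstacle here; the only care required is in identifying the annihilator of $t^m$ in the truncated ring $R$ (the exponent $n-m+1$ comes from $n+1-m$, which is easy to misremember). Once the correct short exact sequence is written down, flatness does all the work.
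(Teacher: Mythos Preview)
Your proof is correct and essentially identical to the paper's: both write down the exact sequence $0 \to (t^{n-m+1}) \to R \xrightarrow{\cdot t^m} R$ in $R = k'[t]/(t^{n+1})$ and then tensor with the flat $R$-algebra $A$.
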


\begin{proof}
Consider the exact sequence
\[
	0 \to (t^{n-m+1})/(t^{n+1}) \to k'[t]/(t^{n+1}) \xrightarrow{\cdot t^m} k'[t]/(t^{n+1}).
\]
Tensoring with the flat algebra $A$, we get the sequence
\[
	0 \to (t^{n-m+1})/(t^{n+1}) \otimes_{k'[t]/(t^{n+1})} A \to A \xrightarrow{\cdot t^m} A,
\]
whose exactness implies the lemma.
\end{proof}

\begin{lemma}
\label{lemmaAboutFactoringMapThroughClosedSubschemeAfterTruncating}
Let $k'$ be a field, let $Y$ be a scheme over $k'\llbracket t \rrbracket$, let $W \hookrightarrow Y$ be a closed subscheme, let $n \in \Z_{\geq 0}$, let $Z$ be a flat scheme over $k'[t]/(t^{n+1})$, and let $Z \to Y$ be a morphism over $k'\llbracket t \rrbracket$. Let $\sJ$ be the ideal sheaf on $Y$ that defines $W$. If $m \leq n$ is such that $(t^m) \cO_Y \cdot \sJ = 0$, then $Z \times_{\Spec(k'[t]/(t^{n+1}))} \Spec(k'[t]/(t^{n-m+1})) \to Z \to Y$ factors through $W \hookrightarrow Y$.
\end{lemma}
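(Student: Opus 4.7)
The plan is to reduce the factorization statement to a local containment of ideals, and then invoke \autoref{lemmaAboutFlatnessAndTorsionAndTruncations}. Specifically, to show that $Z \times_{\Spec(k'[t]/(t^{n+1}))} \Spec(k'[t]/(t^{n-m+1})) \to Y$ factors through $W$, it suffices to show that the pulled-back ideal sheaf $\sJ \cdot \cO_Z$ is contained in $(t^{n-m+1}) \cO_Z$, since then $\sJ \cdot \cO_Z$ vanishes after base change along $\Spec(k'[t]/(t^{n-m+1})) \to \Spec(k'[t]/(t^{n+1}))$.

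To verify that containment, I would work locally on $Z$, say on an affine open corresponding to a ring $A$ that is flat over $k'[t]/(t^{n+1})$. Let $a \in A$ be an element coming from $\sJ \cdot \cO_Z$; write $a = \sum f_i j_i$ where each $j_i$ is the image in $A$ of a local section of $\sJ$. The hypothesis $(t^m) \cO_Y \cdot \sJ = 0$ says that each $j_i$ (viewed already inside $\cO_Y$) satisfies $t^m j_i = 0$, and this relation is preserved under pullback to $\cO_Z$. Thus $t^m a = 0$ in $A$.

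At this point I would invoke \autoref{lemmaAboutFlatnessAndTorsionAndTruncations} with the given $m \leq n$: since $A$ is flat over $k'[t]/(t^{n+1})$ and $t^m a = 0$, we conclude $a \in (t^{n-m+1}) A$. Because this holds for every local section, $\sJ \cdot \cO_Z \subset (t^{n-m+1}) \cO_Z$, which gives the desired factorization.

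I do not expect any real obstacle here; the content is really all packaged into \autoref{lemmaAboutFlatnessAndTorsionAndTruncations}, and the remainder is a short sheaf-theoretic bookkeeping. The only small subtlety is ensuring that the torsion relation $t^m j = 0$ transfers correctly from $\cO_Y$ to $\cO_Z$, which is immediate since the morphism $Z \to Y$ is defined over $k'\llbracket t \rrbracket$, so $t$ pulls back to $t$.
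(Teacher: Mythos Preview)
Your proposal is correct and matches the paper's proof essentially line for line: both reduce to affine opens, observe that sections of $\sJ$ pull back to $t^m$-torsion elements of a flat $k'[t]/(t^{n+1})$-algebra, and then apply \autoref{lemmaAboutFlatnessAndTorsionAndTruncations} to conclude they lie in $(t^{n-m+1})$. The only cosmetic difference is that the paper also passes to an affine open $\Spec(B)\subset Y$ before choosing $\Spec(A)\subset Z$, which makes the ``local sections of $\sJ$'' you refer to explicit.
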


\begin{proof}
It is sufficient to prove that if $\Spec(B) \subset Y$ is an affine open and $\Spec(A) \subset Z$ is an affine open mapped into $\Spec(B)$ by $Z \to Y$, then $\Spec(A / (t^{n-m+1})) \to \Spec(A) \to \Spec(B)$ factors through $W \times_Y \Spec(B) \hookrightarrow \Spec(B)$.

Let $\Spec(B) \subset Y$ be an affine open, let $\Spec(A) \subset Z$ be an affine open mapped into $\Spec(B)$ by $Z \to Y$, and let $J \subset B$ be the set of global sections of $\sJ \cO_{\Spec(B)}$, so the closed immersion $W \times_Y \Spec(B) \hookrightarrow \Spec(B)$ is defined by the ideal $J$, and
\[
	(t^m)J = 0.
\]
Let $b \in J$, and let $a \in A$ be the image of $b$ under $B \to A$. Then $t^ma = 0$, so by \autoref{lemmaAboutFlatnessAndTorsionAndTruncations}, we have $a \in (t^{n-m+1})A$. Thus $B \to A/(t^{n-m+1})$ factors through $B \to B/J$, and we are done.
\end{proof}

For the remainder of this section, let $\sJ$ denote the ideal sheaf on $\cI_\cX$ defining the closed substack $\cX \xrightarrow{\sigma} \cI_\cX$. We will eventually prove \autoref{theoremStabilizersOfJets}(\ref{computationOfStabliizersOfJets}) by expressing the relevant stabilizers as certain deformation spaces. The following proposition is key to setting up the appropriate deformation problem.

\begin{proposition}
\label{autosEventuallyTruncateToSection}
Let $k'$ be a field extension of $k$, let $\varphi: \Spec(k'\llbracket t \rrbracket) \to \cX$ be a morphism over $k$, let $n \in \Z_{\geq 0}$, let $A$ be a $k'$-algebra, and let $g_n: \Spec(A[t]/(t^{n+1})) \to \cI_\cX$ make the diagram
\begin{center}
\begin{tikzcd}
\Spec(A[t]/(t^{n+1})) \arrow{r}{g_n} \arrow[d] & \cI_\cX \arrow[d] \\
\Spec(k'\llbracket t \rrbracket) \arrow{r}{\varphi} & \cX
\end{tikzcd}
\end{center}
2-commute. If $\sI$ is an ideal sheaf on $\cX$ such that $\sI \cO_{\cI_\cX} \cdot \sJ = 0$ and $n \geq \ord_{\sI}(\varphi)$, then the diagram
\begin{center}
\begin{tikzcd}
&\Spec(k'\llbracket t \rrbracket) \arrow{r}{\varphi} & \cX \arrow{d}{\sigma}\\
\Spec(A[t]/(t^{n-\ord_\sI(\varphi)+1})) \arrow[r] \arrow[ru] &\Spec(A[t]/(t^{n+1})) \arrow{r}{g_n}  & \cI_\cX 
\end{tikzcd}
\end{center}
2-commutes.
\end{proposition}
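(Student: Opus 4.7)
The idea is to base change the closed immersion $\sigma: \cX \to \cI_\cX$ along $\varphi: \Spec(k'\llbracket t \rrbracket) \to \cX$ and then reduce directly to \autoref{lemmaAboutFactoringMapThroughClosedSubschemeAfterTruncating}. Set $m := \ord_{\sI}(\varphi)$, so $\varphi^{-1}\sI \cdot \cO_{\Spec(k'\llbracket t\rrbracket)} = (t^m)$, and form the pullback
\[
Y := \cI_\cX \times_{\cX,\varphi} \Spec(k'\llbracket t\rrbracket),
\]
which is an algebraic space over $\Spec(k'\llbracket t\rrbracket)$ because $\cI_\cX \to \cX$ is representable. The pullback of $\sigma$ yields a closed immersion $W := \Spec(k'\llbracket t\rrbracket) \hookrightarrow Y$ (closed because $\cI_\cX \to \cX$ is separated), and the ideal sheaf cutting out $W$ is $\sJ\cO_Y$. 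The hypothesis $\sI\cO_{\cI_\cX}\cdot\sJ = 0$ pulls back to $\sI\cO_Y \cdot \sJ\cO_Y = 0$, and combining this with $\sI\cO_Y = (t^m)\cO_Y$ gives $(t^m)\cO_Y \cdot \sJ\cO_Y = 0$, which is exactly the key hypothesis of \autoref{lemmaAboutFactoringMapThroughClosedSubschemeAfterTruncating}.

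Next, the 2-commutative outer square in the hypothesis, together with the universal property of the fiber product, produces a morphism $Z := \Spec(A[t]/(t^{n+1})) \to Y$ over $\Spec(k'\llbracket t\rrbracket)$; and $Z$ is manifestly flat over $\Spec(k'[t]/(t^{n+1}))$. Applying \autoref{lemmaAboutFactoringMapThroughClosedSubschemeAfterTruncating} to the datum $(Y, W, Z, \sJ\cO_Y, m)$ yields that
\[
\Spec(A[t]/(t^{n-m+1})) \to Z \to Y
\]
factors through $W$. Unwinding the base change, the factorization through $W = \Spec(k'\llbracket t\rrbracket)$ over $\Spec(k'\llbracket t\rrbracket)$ is precisely the assertion that the map $\Spec(A[t]/(t^{n-m+1})) \to \cI_\cX$ factors (2-)through $\sigma \circ \varphi$, which is exactly the desired 2-commutativity.

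The one genuine technical point is that \autoref{lemmaAboutFactoringMapThroughClosedSubschemeAfterTruncating} is stated for $Y$ a scheme whereas here $Y$ is only an algebraic space. However, the given proof of that lemma is phrased entirely in terms of affine opens $\Spec(B) \subset Y$ and $\Spec(A) \subset Z$, so it extends verbatim to the algebraic space setting by passing to an étale atlas of $Y$ (the factorization through a closed subspace can be checked étale-locally). This is the only obstacle I anticipate, and it is essentially cosmetic.
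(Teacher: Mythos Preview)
Your proof is correct and follows essentially the same approach as the paper's: both base change $\sigma\colon \cX \to \cI_\cX$ along $\varphi$ to obtain a closed immersion of $\Spec(k'\llbracket t\rrbracket)$ into the stabilizer space, verify that the pulled-back ideal relation $(t^m)\cO \cdot \sJ\cO = 0$ holds, and then apply \autoref{lemmaAboutFactoringMapThroughClosedSubschemeAfterTruncating}. Your explicit remark that the base change is a priori only an algebraic space (and that the lemma extends \'etale-locally) is a point the paper passes over silently, so in that respect your write-up is slightly more careful.
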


\begin{proof}
Let $\Spec(k'\llbracket t \rrbracket) \xrightarrow{s} G \to \Spec(k' \llbracket t \rrbracket)$ be the base change of $\cX \xrightarrow{\sigma} \cI_\cX \to \cX$ along $\Spec(k'\llbracket t \rrbracket) \xrightarrow{\varphi} \cX$. By choosing a 2-isomorphism that makes
\begin{center}
\begin{tikzcd}
\Spec(A[t]/(t^{n+1})) \arrow{r}{g_n} \arrow[d] & \cI_\cX \arrow[d] \\
\Spec(k'\llbracket t \rrbracket) \arrow{r}{\varphi} & \cX
\end{tikzcd}
\end{center}
2-commute, we obtain a map $h_n: \Spec(A[t]/(t^{n+1})) \to G$ making
\begin{center}
\begin{tikzcd}
\Spec(A[t]/(t^{n+1})) \arrow[bend left]{drr}{g_n} \arrow[bend right]{ddr} \arrow{dr}{h_n} &&\\
& G \arrow[d] \arrow[r] & \cI_\cX \\
&\Spec(k'\llbracket t \rrbracket)& 
\end{tikzcd}
\end{center}
2-commute. Set $m = \ord_\sI(\varphi)$. Let $h_{n-m}: \Spec(A[t]/(t^{n-m+1})) \to G$ be the composition of $\Spec(A[t]/(t^{n-m+1})) \hookrightarrow \Spec(A[t]/(t^{n+1}))$ with $\Spec(A[t]/(t^{n+1})) \xrightarrow{h_n} G$. It is sufficient to prove that the map $h_{n-m}$ factors through the closed immersion $\Spec(k'\llbracket t \rrbracket) \xrightarrow{s} G$.

The closed immersion $\Spec(k'\llbracket t \rrbracket) \xrightarrow{s} G$ is defined by the ideal sheaf $\sJ \cO_G$, and
\[
	(t^m) \cO_G \cdot \sJ \cO_G = (\sI \cO_{\Spec(k'\llbracket t \rrbracket)}) \cO_G \cdot \sJ\cO_G = (\sI\cO_{\cI_\cX} \cdot \sJ)\cO_G = 0.
\]
Therefore \autoref{lemmaAboutFactoringMapThroughClosedSubschemeAfterTruncating} implies that $h_{n-m}$ factors through the closed immersion $\Spec(k'\llbracket t \rrbracket) \xrightarrow{s} G$ as desired.
\end{proof}

Before completing the proof of \autoref{theoremStabilizersOfJets}(\ref{computationOfStabliizersOfJets}), we need the following lemma. We expect this is well-known, but we include a proof for lack of a suitable reference.

\begin{lemma}
\label{LieAlgArgument}
Let $S$ be a scheme and $G$ a (not necessarily flat) group scheme over $S$ with multiplication map $m\colon G\times_S G\to G$ and identity section $e\colon S\to G$. Then for all $\cO_S$-modules $\cF$, the map
\[
\Hom(e^*\Omega^1_{G/S},\cF)\times\Hom(e^*\Omega^1_{G/S},\cF)\to\Hom(e^*\Omega^1_{G/S},\cF)
\]
induced by $m$ is given by addition of functions $(\varphi_1,\varphi_2)\mapsto \varphi_1+\varphi_2$.
\end{lemma}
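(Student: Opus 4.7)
The plan is to interpret $\Hom_{\cO_S}(e^*\Omega^1_{G/S}, \cF)$ through deformation theory as the set of $S$-morphisms $f: S' \to G$ restricting to $e$ along $S \hookrightarrow S'$, where $S' = \Spec_S(\cO_S \oplus \cF)$ and $\cF$ is given the square-zero multiplication. Under this bijection, $\varphi$ corresponds to the morphism $f$ with $f^* = e^* + \delta$, where $\delta: \cO_G \to \cF$ is a derivation relative to $e^*$ that additionally vanishes on $p^*\cO_S$ (here $p: G \to S$ is the structural morphism, and the vanishing records that $f$ is an $S$-morphism); together with the derivation identity, the vanishing $\delta \circ p^* = 0$ makes $\delta$ automatically $\cO_S$-linear when $\cO_G$ is viewed as an $\cO_S$-module via $p^*$, and $\varphi$ is the map induced from $\delta$ by adjunction.

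Under this identification, the pairing of the lemma sends $(\varphi_1, \varphi_2)$ to the element corresponding to $m \circ (f_1, f_2): S' \to G$, where $(f_1, f_2): S' \to G \times_S G$ is the paired morphism. I would compute the resulting ring map directly. For a local section $c \in \cO_G$, write $m^*(c) = \sum_i a_i \otimes b_i$. Since $(m \circ (f_1, f_2))^*(c) = \sum_i f_1^*(a_i) \cdot f_2^*(b_i)$ in $\cO_S \oplus \cF$, expanding $f_j^* = e^* + \delta_j$ and discarding products that land in $\cF \cdot \cF = 0$ leaves
\[
(m \circ (f_1, f_2))^*(c) = \sum_i e^*(a_i) e^*(b_i) + \sum_i e^*(a_i) \delta_2(b_i) + \sum_i \delta_1(a_i) e^*(b_i).
\]

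The crux is to collapse each of these sums using a group axiom. The first equals $e^*(c)$, since $m \circ (e \times_S e) = e$. For the second, the unit axiom $m \circ ((e \circ p) \times \id_G) = \id_G$ translates on rings to $\sum_i p^*(e^*(a_i)) \cdot b_i = c$ in $\cO_G$; applying the $\cO_S$-linear map $\delta_2$ yields $\sum_i e^*(a_i) \delta_2(b_i) = \delta_2(c)$. The symmetric argument using $m \circ (\id_G \times (e \circ p)) = \id_G$ gives $\sum_i \delta_1(a_i) e^*(b_i) = \delta_1(c)$. Combining, $(m \circ (f_1, f_2))^* = e^* + (\delta_1 + \delta_2)$, so the induced element of $\Hom_{\cO_S}(e^*\Omega^1_{G/S}, \cF)$ is $\varphi_1 + \varphi_2$, as claimed.

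The main potential obstacle is keeping the $\cO_S$-module structures straight: $\cF$ is not an $\cO_G$-module a priori, each $\delta_i$ is a derivation relative to $e^*$ (not $\cO_G$-linear in the naive sense), and the $\cO_S$-linearity needed for the cross-term cancellation comes from the subtle fact that $\delta_i \circ p^* = 0$ when combined with the derivation identity. Once this bookkeeping is set up, both unit axioms feed directly into the computation, and notably no flatness hypothesis on $G$ is required.
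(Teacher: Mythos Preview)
Your proof is correct. The paper takes a closely related but differently packaged route: rather than passing to the deformation-theoretic interpretation, it works directly with the map on cotangent sheaves, showing that the map
\[
e^*\Omega^1_{G/S}\xrightarrow{(e\times e)^*\mu} (e\times e)^*\Omega^1_{(G\times_S G)/S}\cong e^*\Omega^1_{G/S}\oplus e^*\Omega^1_{G/S}
\]
is the diagonal $\omega\mapsto(\omega,\omega)$, by observing that $m\circ i_j=\id_G$ for $i_1(g)=(g,e)$ and $i_2(g)=(e,g)$; applying $\Hom(-,\cF)$ then gives addition. Your argument is the tangent-side dual: you identify $\Hom(e^*\Omega^1_{G/S},\cF)$ with lifts of $e$ along the square-zero thickening $S\hookrightarrow S'$ and multiply two such lifts explicitly via derivations, invoking the same unit axioms. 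The mathematical content is equivalent. Your version has the advantage of meshing directly with how the lemma is used in the paper (the stabilizers under consideration literally are such deformation spaces), while the paper's version sidesteps the mild informality of writing $m^*(c)=\sum_i a_i\otimes b_i$ when $G$ need not be affine over $S$, which in your approach would strictly require a word about restricting to an affine neighbourhood of the identity section.
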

\begin{proof}
We have an isomorphism
\[
(e\times e)^*\Omega^1_{(G\times_S G)/S}\simeq e^*\Omega^1_{G/S}\oplus e^*\Omega^1_{G/S}
\]
where the two projection maps $\pi_j\colon(e\times e)^*\Omega^1_{(G\times_S G)/S}\to e^*\Omega^1_{G/S}$ can be described as follows. Let
\[
i_1,i_2\colon G\to G\times_S G
\]
be given by
\[
i_1(g)=(g,e)\quad\textrm{and}\quad i_2(g)=(e,g).
\]
Since $e\times e=i_j\circ e$ for $j\in\{1,2\}$, applying $e^*$ to the morphisms $i_j^*\Omega^1_{G/S}\to\Omega^1_{(G\times_S G)/S}$ yield two maps $(e\times e)^*\Omega^1_{(G\times_S G)/S}\to e^*\Omega^1_{G/S}$, which are precisely the maps $\pi_j$.

Next, $m$ yields a map $\mu\colon m^*\Omega^1_{G/S}\to\Omega^1_{(G\times_S G)/S}$. Since $m\circ i_j=\id$, it follows that $\pi_j\circ(e\times e)^*\mu=\id$. In other words, the map 
\[
e^*\Omega^1_{G/S}\xrightarrow{(e\times e)^*\mu} (e\times e)^*\Omega^1_{(G\times_S G)/S}\simeq e^*\Omega^1_{G/S}\oplus e^*\Omega^1_{G/S}
\]
induced by $m$, is given by
\[
\omega\mapsto(\omega,\omega),
\]
form which the lemma follows.
\end{proof}

We end this section by proving \autoref{theoremStabilizersOfJets}(\ref{computationOfStabliizersOfJets}).

\begin{proof}[Proof of \autoref{theoremStabilizersOfJets}(\ref{computationOfStabliizersOfJets})]
Set $r = \max(\ord_{\sI}(\varphi), \het^{(0)}_{L\sigma^*L_{\cI/\cX}}(\varphi))$. We have $n \geq \ord_\sI(\varphi)$ and $r \geq \ord_{\sI}(\varphi)$, so \autoref{autosEventuallyTruncateToSection} implies that for every $k'$-algebra $A$, the $A$-valued points of the stabilizer of $\theta_n(\varphi) \in \sL_n(\cX)(k')$, are given by deformations $\Spec(A[t]/(t^{n+1})) \to \cI_\cX$ (up to 2-isomorphism) over $\cX$ of the composition
\[
	\Spec(A[t]/(t^{n-r+1}) \to \Spec(k'[t]/(t^{n-r + 1})) \xrightarrow{\varphi_{n-r}} \cX \xrightarrow{\sigma} \cI_\cX,
\]
where $\varphi_{n-r}$ is the jet associated to $\theta_{n-r}(\varphi) \in \sL_{n-r}(\cX)(k')$. Because $n \geq  2r - 1$, the closed immersion 
\[
	\Spec(A[t]/(t^{n-r+1})) \hookrightarrow \Spec(A[t]/(t^{n+1}))
\]
is defined by a square-zero ideal. (Note that the stabilizer of $\theta_n(\varphi)$ is a scheme, so we do not need to worry about 2-automorphisms: this is consistent with the next deformation theory computation keeping in mind that $\cI_\cX \to \cX$ is representable.) Therefore the stabilizer of $\theta_n(\varphi)$ is isomorphic as a scheme to
\[
	\Ext^0(L\varphi_{n-r}^*L\sigma^* L_{\cI_\cX /\cX}, (t^{n+1})/(t^{n-r+1})).
\]
This isomorphism is compatible with the group structures by \autoref{LieAlgArgument}, so the stabilizer of $\theta_n(\varphi)$ is isomorphic as an algebraic group to
\[
	\bG_{a, k'}^{\dim_{k'} \Ext^0(L\varphi_{n-r}^*L\sigma^* L_{\cI_\cX /\cX}, (t^{n+1})/(t^{n-r+1}))}.
\]
Therefore it is now sufficient to prove that
\[
	\dim_{k'} \Ext^0(L\varphi_{n-r}^*L\sigma^* L_{\cI_\cX /\cX}, (t^{n+1})/(t^{n-r+1})) = \het^{(0)}_{L\sigma^*L_{\cI_\cX/\cX}}(\varphi).
\]
Because $n+1 \geq r$, \autoref{replacingExtWithH} implies
\[
	\dim_{k'} \Ext^0(L\varphi_{n-r}^*L\sigma^* L_{\cI_\cX /\cX}, (t^{n+1})/(t^{n-r+1})) = \het^{(0)}_{r-1, L\sigma^*L_{\cI_\cX / \cX}}(\varphi).
\]
Note that because $\cI_\cX \to \cX$ is representable,
\[
	\het^{(1)}_{L\sigma^* L_{\cI_\cX / \cX}}(\varphi) = 0.
\]
Thus because $r \geq \het^{(0)}_{L\sigma^* L_{\cI_\cX / \cX}}(\varphi)$, \autoref{theoremMainPropertiesOfHeightFunctions}(\ref{theoremRelatingArcHeightFunctionToJetHeightFunction}) implies that
\[
	\het^{(0)}_{r-1, L\sigma^*L_{\cI_\cX / \cX}}(\varphi) = \het^{(0)}_{L\sigma^* L_{\cI_\cX / \cX}}(\varphi),
\]
completing the proof.
\end{proof}

\section{A technical theorem}

The main result in this section is \autoref{technicalLemma}, which is a key technical lemma in the proof of \autoref{theoremChangeOfVariables}(\ref{theoremPartChangeOfVariables}).

\begin{theorem}\label{technicalLemma}
Let $\cX$ be a smooth irreducible finite type Artin stack over $k$, let $Y$ be an irreducible finite type scheme over $k$, let $\pi: \cX \to Y$ be a morphism, let $\cU$ be an open substack of $\cX$ such that $\cU \to \cX \to Y$ is an open immersion, and let $\cC \subset |\sL(\cX)| \setminus |\sL(\cX \setminus \cU)| \subset \sL(\cX)$ be a cylinder. Let $h,e' \in \Z_{\geq 0}$, let $\sJ_Y$ be the $(\dim Y)$th fitting ideal of $\Omega^1_{Y/k}$, and assume that $\het^{(0)}_{L_{\cX/Y}}$ and $\ord_{\sJ_Y} \circ \sL(\pi)$ are equal to $h$ and $e'$, respectively, on $\cC$. Let $\ell \in \Z_{\geq 0}$ be such that $\cC$ is the preimage along $\theta_\ell$ of a constructible subset of $\sL_\ell(\cX)$. Assume that the map $\overline{\cC}(k') \to \sL(Y)(k')$ is injective for all field extensions $k'$ of $k$.

Let $k'$ be a field extension of $k$, let $n \in \Z_{\geq 0}$, let $\varphi, \varphi' \in \cC(k')$, and assume that $\theta_n(\sL(\pi)(\varphi)) = \theta_n(\sL(\pi)(\varphi'))$. If $n \geq \max(\ell + h, e')$, then $\theta_{n-e}(\varphi) \cong \theta_{n-e}(\varphi')$ for any $e \in \Z_{\geq0}$ satisfying $n \geq e \geq h$.
\end{theorem}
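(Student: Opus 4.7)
Since $\theta_{n-e}=\theta^{n-h}_{n-e}\circ\theta_{n-h}$ for $e\ge h$, it suffices to prove the case $e=h$, which in particular ensures $n-e=n-h\ge\ell$. The plan is then to construct an arc $\widetilde\varphi\in\sL(\cX)(k')$ satisfying $\theta_{n-h}(\widetilde\varphi)\cong\theta_{n-h}(\varphi)$ in $\sL_{n-h}(\cX)(k')$ and $\sL(\pi)(\widetilde\varphi)=\sL(\pi)(\varphi')$ in $\sL(Y)(k')$. Granted such a $\widetilde\varphi$, the first property combined with $n-h\ge\ell$ implies $\theta_\ell(\widetilde\varphi)\cong\theta_\ell(\varphi)$ and hence $\widetilde\varphi\in\cC(k')$; the injectivity hypothesis then forces $\widetilde\varphi\cong\varphi'$, so $\theta_{n-h}(\varphi)\cong\theta_{n-h}(\widetilde\varphi)\cong\theta_{n-h}(\varphi')$.

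To produce $\widetilde\varphi$ I would build, by induction on $m\ge n-h$, a compatible system of jets $\widetilde\varphi_m$ with $\pi\circ\widetilde\varphi_m=\theta_m(\sL(\pi)(\varphi'))$. For $n-h\le m\le n$ the natural choice $\widetilde\varphi_m:=\theta_m(\varphi)$ works, since the hypothesis $\theta_n(\pi\circ\varphi)=\theta_n(\pi\circ\varphi')$ gives the $Y$-compatibility at every such level. For $m>n$, one must extend $\widetilde\varphi_{m-1}$ across the square-zero extension $R_m\twoheadrightarrow R_{m-1}$ (with ideal $(t^m)/(t^{m+1})$) to an $m$-jet whose composition with $\pi$ equals the prescribed jet $\theta_m(\sL(\pi)(\varphi'))$. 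Relative deformation theory for maps from an affine scheme to the Artin stack $\cX$ identifies the obstruction to such a lift as a class in
\[
\Ext^1_{R_{m-1}}\bigl(L\widetilde\varphi_{m-1}^*L_{\cX/Y},\,(t^m)/(t^{m+1})\bigr),
\]
whose dimension, by \autoref{replacingExtWithH}, equals $\dim_{k'}H^{-1}$ of an appropriate truncation-pullback of $L\widetilde\varphi_{m-1}^*L_{\cX/Y}$.

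The main obstacle, and the heart of the argument, is showing that these obstruction classes vanish for every $m>n$. Since $\cU\hookrightarrow\cX\to Y$ is an open immersion, $L_{\cX/Y}$ restricts to $0$ on $\cU$, so $L\varphi^*L_{\cX/Y}$ is pseudo-coherent with torsion cohomology on the DVR $R=k'\llbracket t\rrbracket$. The conditions $\het^{(0)}_{L_{\cX/Y}}(\varphi)=h$ and $\ord_{\sJ_Y}(\sL(\pi)(\varphi))=e'$, worked through the exact triangle $\pi^*L_Y\to L_\cX\to L_{\cX/Y}$ (using that $\cX$ smooth constrains $L_\cX$) together with the structure theorem for modules over a DVR from \autoref{lenMfornsufflarge}, bound the torsion length in each degree of $L\varphi^*L_{\cX/Y}$; the spectral sequence of \autoref{theoremMainPropertiesOfHeightFunctions}(\ref{theoremRelatingArcHeightFunctionToJetHeightFunction}) then transports this control to the $(m-1)$-truncation used to form $\widetilde\varphi_{m-1}$. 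The delicate point is decoupling the stacky contribution (controlled by $h$ through $H^0$) from the singular-$Y$ contribution (controlled by $e'$ through $\sJ_Y$), and verifying that when $m\ge\max(\ell+h,e')+1$ these two constraints jointly force the specific obstruction class to vanish. Once all obstructions vanish, the system $(\widetilde\varphi_m)$ assembles into the required arc $\widetilde\varphi$, completing the proof via the injectivity step described above.
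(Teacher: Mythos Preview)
Your overall strategy—construct an arc $\widetilde\varphi$ agreeing with $\varphi$ to level $n-h$ and mapping to $\alpha'$ under $\pi$, then invoke injectivity—is exactly the paper's framework. The gap is in your inductive construction. By fixing $\widetilde\varphi_m = \theta_m(\varphi)$ for all $m \le n$ and then extending one level at a time, you are at the very first step asking that $\theta_n(\varphi)$ lift to an $(n{+}1)$-jet over the prescribed target $\theta_{n+1}(\alpha')$. This obstruction, in $\Ext^1(L\varphi_n^*L_{\cX/Y}, k')$, does \emph{not} vanish in general: if it did, your argument would actually yield $\theta_n(\widetilde\varphi)\cong\theta_n(\varphi)$ and hence $\theta_n(\varphi)\cong\theta_n(\varphi')$, which is strictly stronger than the theorem and fails whenever $h>0$ (the fibers of $\sL_n(\pi)$ over the relevant locus are positive-dimensional, precisely of dimension governed by $h$). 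Bounding the torsion in $L\varphi^*L_{\cX/Y}$ constrains the size of this $\Ext^1$ but cannot kill a specific class in it.

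The paper's remedy is to give up control of the jet at levels $n-e+1,\dots,n$: rather than lifting $\theta_n(\varphi)$ across a single square-zero extension, \autoref{technicalLemmaProp} shows directly that the class $\phi\in\Hom(\alpha^*\Omega^1_{Y/k},\fm^{n+1}/\fm^{n+2})$ encoding $\theta_{n+1}(\alpha')$ lifts all the way to $\Ext^0(L\varphi^*L_{\cX/k},\fm^{n+1-e}/\fm^{n+2})$, producing an $(n{+}1)$-jet agreeing with $\varphi$ only at level $n-e$. The key ingredient your sketch is missing is how the condition $n\ge e'$ enters: $\phi$ is the reduction of a map $\Phi$ valued in $\fm^{n+1}/\fm^{2(n+1)}$ (coming from the $(2n{+}1)$-jet of $\alpha'$), and $\Phi$ sends $(\alpha^*\Omega^1_{Y/k})_{\tors}$ into $\fm^{2(n+1)-e'}/\fm^{2(n+1)}$, which lands above $\fm^{n+2}$ precisely when $n\ge e'$. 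Hence $\phi$ factors through the free quotient $\alpha^*\Omega^1_{Y/k}/(\alpha^*\Omega^1_{Y/k})_{\tors}$, and then an explicit Smith-normal-form computation (\autoref{extendingMapZellGraded}, using $e\ge h$) writes down the lift to the vector bundle $L^0\varphi^*L_{\cX/k}$ identified by \autoref{keyLemmaForLooijenga-twisted}. So the argument is an explicit construction of a lift in a larger deformation space, not a vanishing of an obstruction class in the small one.
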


The proof of \autoref{technicalLemma} will occupy the entirety of this section. We claim that it suffices to prove:

\begin{proposition}\label{technicalLemmaProp}
With notation and hypotheses as in the first paragraph of \autoref{technicalLemma}. Let $k'$ be a field extension of $k$, let $n \in \Z_{\geq 0}$, let $\varphi\in \cC(k')$, let $\alpha=\pi\circ\varphi$, and let $\fm=(t)\subset k'\llbracket t \rrbracket$. 

Suppose $n \geq \max(h, e')$. Then if $\Phi\in\Hom(\alpha^*\Omega^1_{Y/k},\fm^{n+1}/\fm^{2(n+1)})$ and $\phi\in\Hom(\alpha^*\Omega^1_{Y/k},\fm^{n+1}/\fm^{n+2})$ is the induced map, then $\phi$ lifts to an element of the group $\Ext^0(L\varphi^*L_{\cX/k},\fm^{n+1-e}/\fm^{n+2})$.
\end{proposition}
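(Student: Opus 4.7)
The plan is to translate the lifting problem into a vanishing-of-obstruction via the cotangent complex, then exploit the three numerical inputs (the thickening $\Phi$, the Jacobian order $e'$, and the height $h$) in succession. Specifically, pulling back the distinguished triangle $\pi^*L_{Y/k}\to L_{\cX/k}\to L_{\cX/Y}$ along $\varphi$ and applying $R\Hom(-,M)$ with $M:=\fm^{n+1-e}/\fm^{n+2}$ yields a long exact sequence whose relevant segment is
\[
\Ext^0(L\varphi^*L_{\cX/k},M)\to\Hom(\alpha^*\Omega^1_{Y/k},M)\xrightarrow{\delta}\Ext^1(L\varphi^*L_{\cX/Y},M),
\]
so the problem reduces to showing that $\delta$ annihilates the composite $\alpha^*\Omega^1_{Y/k}\xrightarrow{\phi}\fm^{n+1}/\fm^{n+2}\hookrightarrow M$.

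Next, I would use the Jacobian hypothesis to show that $\phi$ factors through the free part of $\alpha^*\Omega^1_{Y/k}$. Since $\varphi\in\cC$ is not entirely supported in $\cX\setminus\cU$, the arc $\alpha$ generically factors through the smooth open immersion $\cU\to Y$, so the module $\alpha^*\Omega^1_{Y/k}$ over the DVR $R=k'\llbracket t\rrbracket$ has free rank $\dim Y$. The Fitting-ideal hypothesis $\ord_{\sJ_Y}(\alpha)=e'$ then identifies the torsion summand $T$ as having $\dim_{k'}T=e'$, so $t^{e'}T=0$. Therefore $\Phi(T)\subset\fm^{n+1}/\fm^{2(n+1)}$ is $t^{e'}$-torsion, hence contained in $\fm^{2(n+1)-e'}/\fm^{2(n+1)}$, and this projects to zero in $\fm^{n+1}/\fm^{n+2}$ because $n\ge e'$ gives $2(n+1)-e'\ge n+2$. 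Hence $\phi|_T=0$ and $\phi$ factors through the free quotient $R^{\dim Y}$.

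Third, I would invoke the height hypothesis. Since $\het^{(0)}_{L_{\cX/Y}}(\varphi)=h$, the $R$-module $H^0(L\varphi^*L_{\cX/Y})$ has total length $h$ and is in particular killed by $t^h$. Combined with the free-part factorization of $\phi$ from the previous step, a careful analysis of $\delta$ should show that $\delta(\phi)\in\Ext^1(L\varphi^*L_{\cX/Y},\fm^{n+1}/\fm^{n+2})$ lies in the image of the connecting map from the short exact sequence
\[
0\to\fm^{n+1}/\fm^{n+2}\to M\to\fm^{n+1-e}/\fm^{n+1}\to 0
\]
(whose right term is $\cong R/(t^e)$); this image is precisely the kernel of the map $\Ext^1(L\varphi^*L_{\cX/Y},\fm^{n+1}/\fm^{n+2})\to\Ext^1(L\varphi^*L_{\cX/Y},M)$, and the inequality $e\ge h$ provides exactly the room needed in the source $\Ext^0(L\varphi^*L_{\cX/Y},R/(t^e))$ of this connecting map, so $\delta(\phi)=0$ in $\Ext^1(L\varphi^*L_{\cX/Y},M)$.

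The main obstacle will be making the last step rigorous: since $L\varphi^*L_{\cX/Y}$ is a genuine complex in $D^{\leq 1}$ with multiple potentially nonzero cohomologies (only $H^0$ is directly bounded by $h$), computing the relevant $\Ext$ groups requires a spectral sequence analysis mixing $\Ext^p(H^{-q},-)$ contributions, and one must track how the free-part factorization of $\phi$ interacts with that spectral sequence. Lemma \ref{replacingExtWithH} translates the relevant $\Ext$ groups into heights of truncated jet pullbacks, and Theorem \ref{theoremMainPropertiesOfHeightFunctions}(b) controls the interaction of heights with truncation; together these should provide the machinery to push the argument through.
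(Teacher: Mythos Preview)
Your setup and second step are correct and mirror the paper closely: the long exact sequence is the right framework, and the torsion-killing argument (using $n\ge e'$ together with the thickening $\Phi$) is exactly the first half of the paper's \autoref{extendingMapZellGraded}. However, your third step is a genuine gap, and the tools you propose do not close it: \autoref{replacingExtWithH} and \autoref{theoremMainPropertiesOfHeightFunctions} compute \emph{dimensions} of Ext groups, not the vanishing of a specific obstruction class. Showing that $\delta'(\phi)\in\Ext^1(L\varphi^*L_{\cX/Y},\fm^{n+1}/\fm^{n+2})$ lies in the image of the connecting map from $\Ext^0(L\varphi^*L_{\cX/Y},R/t^e)$ amounts to exhibiting a specific preimage, which is essentially the original lifting problem in disguise; the inequality $e\ge h$ bounds the size of the source but does not by itself produce the element you need.

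The paper bypasses this obstruction calculation via a structural fact you are missing: $L^0\varphi^*L_{\cX/k}$ is a \emph{vector bundle} on $D$, and the natural map $\alpha^*\Omega^1_{Y/k}\to L^0\varphi^*L_{\cX/k}$ has kernel exactly $(\alpha^*\Omega^1_{Y/k})_{\tors}$ and cokernel $L^0\varphi^*L_{\cX/Y}$ of length $h$ (\autoref{keyLemmaForLooijenga-twisted}). This recasts the lifting as an elementary problem: extend $\phi$ along an inclusion of free $R$-modules of the same rank with cokernel of length $h$, which is done by choosing compatible bases (\autoref{extendingMapZellGraded}). A separate short argument shows $\Ext^0(L\varphi^*L_{\cX/k},M)\to\Hom(L^0\varphi^*L_{\cX/k},M)$ is surjective (the obstruction lives in an $\Ext^2$ over a PID, hence vanishes), so the explicit lift to $L^0\varphi^*L_{\cX/k}$ promotes to the full $\Ext^0$. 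In short: rather than prove an obstruction vanishes, the paper constructs the lift by hand, and the enabling observation is the freeness of $L^0\varphi^*L_{\cX/k}$.
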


Let us explain why the above result implies \autoref{technicalLemma}. Let $\varphi'$ be as in \autoref{technicalLemma} and let $\alpha'=\pi\circ\varphi'$. Since $\theta_{2(n+1)-1}(\alpha)$ and $\theta_{2(n+1)-1}(\alpha')$ define the same $n$-jet of $Y$, deformation theory allows us to view $\theta_{2(n+1)-1}(\alpha')$ as an element $\Phi\in\Hom(\alpha^*\Omega^1_{Y/k},\fm^{n+1}/\fm^{2(n+1)})$. Let $\widetilde{\phi}\in \Ext^0(L\varphi^*L_{\cX/k},\fm^{n+1-e}/\fm^{n+2})$ be the lift of $\phi$ provided by \autoref{technicalLemmaProp}. Perturbing $\theta_{n+1}(\varphi')$ by the element $\widetilde{\phi}$ in the deformation space, we arrive at a new element $\varphi''_{n+1}\in\sL_{n+1}(\cX)(k')$, and since $\cX$ is smooth, infinitesimal lifting allows us to lift $\varphi''_{n+1}$ to an arc $\varphi''\in\sL(\cX)(k')$. Furthermore, by construction, $\varphi''$ satisfies $\theta_{n-e}(\varphi)\cong\theta_{n-e}(\varphi'')$ and $\theta_{n+1}(\sL(\pi)(\varphi')) = \theta_{n+1}(\alpha') = \theta_{n+1}(\sL(\pi)(\varphi''))$.

Proceeding by induction (note that $\sL(\cX)$ is the inverse limit of the $\sL_n(\cX)$), we arrive at an element $\varphi'''\in\sL(\cX)(k')$ such that $\theta_{n-e}(\varphi)\cong\theta_{n-e}(\varphi''')$ and $\sL(\pi)(\varphi') = \sL(\pi)(\varphi''')$. Since $n-e\geq\ell$, $\cC$ is the preimage along $\theta_\ell$ of a constructible subset of $\sL_\ell(\cX)$, and $\varphi\in\cC(k')$, we see $\varphi'''\in\cC(k')$. Then since $\overline{\cC}(k') \to \sL(Y)(k')$ is injective and $\sL(\pi)(\varphi') = \sL(\pi)(\varphi''')$, we see $\varphi'\cong\varphi'''$. Therefore, $\theta_{n-e}(\varphi) \cong \theta_{n-e}(\varphi''')\cong \theta_{n-e}(\varphi')$, proving \autoref{technicalLemma}.

\vspace{1em}

\autoref{technicalLemmaProp} will be proved after a series of lemmas. Throughout the rest of this section, we fix the notation $D=\Spec(k'\llbracket t \rrbracket)$.

\begin{lemma}
\label{keyLemmaForLooijenga-twisted}
The following hold:
\begin{itemize}
\item $L^0\varphi^*L_{\cX/k}$ is a vector bundle,
\item $L^i\varphi^*L_{\cX/Y}$ is torsion for any $i\in\Z$, and
\item $L^{-1}\varphi^*L_{\cX/Y}=(\alpha^*\Omega^1_{Y/k})_{\tors}$.
\end{itemize}
Moreover, the natural map $L\alpha^*L_{Y/k}\to L\varphi^*L_{\cX/k}$ induces an exact sequence
\begin{equation}\label{eqn:keyLemmaForLooijenga-twisted}
0\to (\alpha^*\Omega^1_{Y/k})_{\tors}\to \alpha^*\Omega^1_{Y/k}\to L^0\varphi^*L_{\cX/k}\to L^0\varphi^*L_{\cX/Y}\to 0.
\end{equation}
\end{lemma}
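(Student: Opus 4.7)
The plan is to read off all four claims from the long exact cohomology sequence of the distinguished triangle
\[
L\alpha^*L_{Y/k}\to L\varphi^*L_{\cX/k}\to L\varphi^*L_{\cX/Y}\to L\alpha^*L_{Y/k}[1]
\]
associated to $\pi\colon\cX\to Y$, combined with two geometric inputs.

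First, since $\cX$ is smooth Artin, $L_{\cX/k}$ is perfect of tor-amplitude $[0,1]$ and can be represented by a two-term complex of vector bundles $[E^0\to E^1]$. Pulling back to $D=\Spec(k'\llbracket t\rrbracket)$ gives $L\varphi^*L_{\cX/k}\simeq[\varphi^*E^0\to\varphi^*E^1]$, so $L^{-1}\varphi^*L_{\cX/k}=0$, and $L^0\varphi^*L_{\cX/k}=\ker(\varphi^*E^0\to\varphi^*E^1)$ is torsion-free and finitely generated over the DVR $k'\llbracket t\rrbracket$, hence free. Via the truncation triangle for $L_{\cX/k}$, this $L^0$ is identified with $\varphi^*\Omega^1_{\cX/k}$, proving the first bullet.

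Second, because $\cU\hookrightarrow\cX\to Y$ is an open immersion and $Y$ is a scheme, $\cU$ is representable over $Y$ and hence isomorphic to an open subscheme $U\subseteq Y$, yielding $L_{\cU/Y}=0$ and, by localization, $L_{\cX/Y}|_{\cU}=0$. The hypothesis $\cC\subseteq|\sL(\cX)|\setminus|\sL(\cX\setminus\cU)|$ forces the generic point of $D$ to map via $\varphi$ into $\cU$, so $L\varphi^*L_{\cX/Y}$ vanishes at the generic point of $D$. Since $L_{\cX/Y}\in D^-_{\coh}(\cX)$, each $L^i\varphi^*L_{\cX/Y}$ is a finitely generated $k'\llbracket t\rrbracket$-module that vanishes at the generic point of the DVR, hence is torsion; this is the second bullet.

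To finish, note that $L_{Y/k}$ lives in non-positive cohomological degrees, so right-exactness of $L\alpha^*$ gives $L^0\alpha^*L_{Y/k}=\alpha^*\Omega^1_{Y/k}$ and $L^i\alpha^*L_{Y/k}=0$ for $i\geq 1$. Combined with $L^{-1}\varphi^*L_{\cX/k}=0$ and $L^0\varphi^*L_{\cX/k}=\varphi^*\Omega^1_{\cX/k}$, the long exact sequence in degrees $-1,0$ reduces to
\[
0\to L^{-1}\varphi^*L_{\cX/Y}\to\alpha^*\Omega^1_{Y/k}\to L^0\varphi^*L_{\cX/k}\to L^0\varphi^*L_{\cX/Y}\to 0,
\]
which is the claimed exact sequence up to identifying $L^{-1}\varphi^*L_{\cX/Y}$ with $(\alpha^*\Omega^1_{Y/k})_{\tors}$. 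Since $L^0\varphi^*L_{\cX/k}$ is torsion-free, $(\alpha^*\Omega^1_{Y/k})_{\tors}$ lies in the kernel; conversely, the map $\alpha^*\Omega^1_{Y/k}\to L^0\varphi^*L_{\cX/k}$ is an isomorphism at the generic point of $D$ (both sides become $\Omega^1_{\cU/k}\otimes k'((t))$ via $\cU\simeq U$), so the kernel is generically zero, hence torsion, giving the reverse inclusion and the third bullet.

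I do not anticipate a substantial obstacle; the most delicate point is invoking a two-term vector bundle model for $L_{\cX/k}$ in the first input, but this is a standard consequence of smoothness (cf.\ Olsson's construction of the cotangent complex), and in any case one may reason after smooth base change since every conclusion drawn is a statement at the level of cohomology sheaves.
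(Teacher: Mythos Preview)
Your proof is correct and follows the same overall route as the paper: both read everything off the long exact sequence of the triangle $L\alpha^*L_{Y/k}\to L\varphi^*L_{\cX/k}\to L\varphi^*L_{\cX/Y}$, both prove the torsion claim by showing $L_{\cX/Y}|_{\cU}=0$ and that the generic point of $D$ lands in $\cU$, and both deduce the third bullet from torsion-freeness of $L^0\varphi^*L_{\cX/k}$ together with the second bullet.

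The one substantive difference is in the first bullet. The paper argues via a smooth cover $p\colon\widetilde{X}\to\cX$ and the triangle $p^*L_{\cX/k}\to\Omega^1_{\widetilde{X}/k}\to\Omega^1_{\widetilde{X}/\cX}$ to embed $q^*L^0\varphi^*L_{\cX/k}$ into a vector bundle; you instead invoke that $L_{\cX/k}$ has perfect amplitude $[0,1]$ for a smooth Artin stack, so over the local ring $k'\llbracket t\rrbracket$ the pullback is modeled by a two-term complex of free modules and $H^0$ is a kernel inside a free module. Your route is cleaner once one accepts the amplitude statement; the paper's argument is more self-contained. One small correction: your assertion that the truncation triangle identifies $L^0\varphi^*L_{\cX/k}$ with $\varphi^*\Omega^1_{\cX/k}$ is not justified (and in general fails, since $H^1(L_{\cX/k})$ need not be flat, so $L^{-1}\varphi^*H^1(L_{\cX/k})$ can contribute). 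This identification is irrelevant to the lemma---you have already shown $L^0\varphi^*L_{\cX/k}$ is free as a kernel over a PID---so simply drop that sentence.
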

\begin{proof}
Applying $L\varphi^*$ to the exact triangle
\[
L\pi^*L_{Y/k}\to L_{\cX/k}\to L_{\cX/Y},
\]
we have an exact triangle
\[
L\alpha^*L_{Y/k}\to L\varphi^*L_{\cX/k}\to L\varphi^*L_{\cX/Y}.
\]
Taking cohomology sheaves, 
we have the exact sequence 
\[
0\to L^{-1}\varphi^*L_{\cX/Y}\to \alpha^*\Omega^1_{Y/k}\stackrel{\sigma}{\to} L^0\varphi^*L_{\cX/k}\to L^0\varphi^*L_{\cX/Y}\to 0.
\]
To obtain (\ref{eqn:keyLemmaForLooijenga-twisted}), we must show $L^{-1}\varphi^*L_{\cX/Y}=(\alpha^*\Omega^1_{Y/k})_{\tors}$. This will follow upon showing $L^0\varphi^*L_{\cX/k}$ is torsion-free and $L^{-1}\varphi^*L_{\cX/Y}$ is torsion. Indeed, then
\[
(\alpha^*\Omega^1_{Y/k})_{\tors}\subseteq\ker(\sigma)=L^{-1}\varphi^*L_{\cX/Y}\subseteq(\alpha^*\Omega^1_{Y/k})_{\tors}.
\]

Next, consider the cartesian diagram
\[
\xymatrix{
V\ar[r]^-{\widetilde{\varphi}}\ar[d]_-{q} & \widetilde{X}\ar[d]^-{p}\\
D\ar[r]^-{\varphi} & \cX
}
\]
where $p$ is a smooth cover. 
Then we have an exact triangle
\[
p^*L_{\cX/k}\to\Omega^1_{\widetilde{X}/k}\to\Omega^1_{\widetilde{X}/\cX}
\]
and applying $L\widetilde{\varphi}^*$, we have an exact triangle
\[
q^*L\varphi^*L_{\cX/k}\to\widetilde{\varphi}^*\Omega^1_{\widetilde{X}/k}\to\widetilde{\varphi}^*\Omega^1_{\widetilde{X}/\cX}.
\]
As a result, $q^*L^0\varphi^*L_{\cX/k}\to\widetilde{\varphi}^*\Omega^1_{\widetilde{X}}$ is injective and since $\widetilde{\varphi}^*\Omega^1_{\widetilde{X}}$ is a vector bundle, $q^*L^0\varphi^*L_{\cX/k}$ is torsion-free. Since $q$ is smooth, this implies $L^0\varphi^*L_{\cX/k}$ is a torsion-free sheaf on $D$, hence a vector bundle.

It remains to prove that $L^m\varphi^*L_{\cX/Y}$ is torsion for all $m\in\Z$. To do so, it is enough to show $L^m\varphi^*L_{\cX/Y}$ is supported on the closed fiber of $D$. In other words, we need only show $\eta^*L^m\varphi^*L_{\cX/Y}=0$, where $\eta\colon D^\circ\to D$ is the generic point. Since the generic point of $\varphi$ factors through $\cU$, we have a commutative diagram
\[
\xymatrix{
D^\circ\ar[d]_-{\eta}\ar[r]^-{\varphi^\circ} & \cU\ar[d]^-{j}\\
D\ar[r]^-{\varphi} & \cX
}
\]
So, $\eta^*L^m\varphi^*L_{\cX/Y}=L^m(\varphi^\circ)^*j^*L_{\cX/Y}$. From the exact triangle
\[
j^*L_{\cX/Y}\to L_{\cU/\cX}\to L_{\cU/Y}
\]
and the fact that $\cU\to\cX$ and $\cU\to Y$ are open immersions, we have $ L_{\cU/\cX}=L_{\cU/Y}=0$, and hence $j^*L_{\cX/Y}=0$.
\end{proof}

\begin{lemma}
\label{extendingMapZellGraded}
Let $M$ be a finitely generated $k'\llbracket t \rrbracket$-module, $F$ be a finite-rank free $k'\llbracket t \rrbracket$-module, and 
\[
0\to M_{\tors}\to M\to F\to Q\to0
\]
be an exact sequence. Let $h=\len(Q)$, $e'=\len(M_{\tors})$, $e\geq h$, and $n\geq\max(e,e')$. If $\Phi\in\Hom(M,\fm^{n+1}/\fm^{2(n+1)})$ and $\phi\in\Hom(M,\fm^{n+1}/\fm^{n+2})$ is the induced map, then $\phi$ lifts to an element of $\Hom(F,\fm^{n+1-e}/\fm^{n+2})$.
\end{lemma}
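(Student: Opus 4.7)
The key observation is that for any $\widetilde{\phi}\in\Hom(F,\fm^{n+1-e}/\fm^{n+2})$, the composition $M\to F\xrightarrow{\widetilde{\phi}}\fm^{n+1-e}/\fm^{n+2}$ automatically kills $M_{\tors}$, because $F$ is torsion-free. Thus the very existence of a lift forces $\phi|_{M_{\tors}}=0$. So the plan breaks into two parts: first prove this vanishing (this is where the hypothesis on $\Phi$, rather than just $\phi$, is used), and then construct $\widetilde{\phi}$ explicitly via Smith normal form.

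\textbf{Step 1: vanishing on torsion.} For any $m\in M_{\tors}$ we have $t^{e'}m=0$, hence $t^{e'}\Phi(m)=0$. Under the isomorphism $\fm^{n+1}/\fm^{2(n+1)}\cong k'\llbracket t\rrbracket/(t^{n+1})$ (via multiplication by $t^{n+1}$), the $t^{e'}$-annihilator is $(t^{n+1-e'})/(t^{n+1})$, which corresponds to the submodule $\fm^{2(n+1)-e'}/\fm^{2(n+1)}\subseteq\fm^{n+1}/\fm^{2(n+1)}$. Since $n\geq e'$ gives $2(n+1)-e'\geq n+2$, we conclude $\Phi(m)\in\fm^{n+2}/\fm^{2(n+1)}$, and therefore its image $\phi(m)$ in $\fm^{n+1}/\fm^{n+2}$ vanishes. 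In particular $\phi$ factors through the free module $M/M_{\tors}$, which has the same rank $r$ as $F$.

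\textbf{Step 2: construction via Smith normal form.} Apply the structure theorem to the inclusion $M/M_{\tors}\hookrightarrow F$ with finite-length cokernel $Q$: choose a basis $f_1,\dots,f_r$ of $F$ and integers $a_i\geq 0$ with $\sum_i a_i=\len(Q)=h$ such that $(t^{a_i}f_i)_i$ is a basis of $M/M_{\tors}$. In particular $a_i\leq h\leq e$ for every $i$. Since $\fm^{n+1}/\fm^{n+2}$ is one-dimensional over $k'$, write $\phi(t^{a_i}f_i)=t^{n+1}b_i\bmod\fm^{n+2}$ for uniquely determined $b_i\in k'$, and define
\[
\widetilde{\phi}(f_i):=t^{n+1-a_i}b_i\bmod\fm^{n+2}.
\]
The inequality $a_i\leq e$ ensures $n+1-a_i\geq n+1-e$, so $\widetilde{\phi}(f_i)\in\fm^{n+1-e}/\fm^{n+2}$; extend $k'\llbracket t\rrbracket$-linearly. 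Then $\widetilde{\phi}(t^{a_i}f_i)=t^{a_i}\cdot t^{n+1-a_i}b_i=t^{n+1}b_i=\phi(t^{a_i}f_i)$ in $\fm^{n+1-e}/\fm^{n+2}$, so $\widetilde{\phi}\circ(M\to F)$ agrees with $\phi$ on a basis of $M/M_{\tors}$ (and with $\phi$ on $M_{\tors}$ by Step~1), hence on all of $M$.

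\textbf{Main obstacle.} The substantive content is entirely in Step~1: Step~2 is a routine ``division by $t^{a_i}$'' trick whose cost in $\fm$-valuation is exactly $\max_i a_i\leq h\leq e$, explaining the target $\fm^{n+1-e}/\fm^{n+2}$. The lemma cannot hold starting from $\phi$ alone, because $\phi$ generally need not annihilate $M_{\tors}$; the role of $\Phi$ (and of the enlarged codomain $\fm^{n+1}/\fm^{2(n+1)}$ with its extra tail of length $n+1$) is precisely to force torsion elements of $M$ into a deep enough power of $\fm$ to die upon projection to $\fm^{n+1}/\fm^{n+2}$. The hypothesis $n\geq e'$ is tuned exactly to this threshold.
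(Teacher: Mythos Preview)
Your proof is correct and follows essentially the same approach as the paper's: first use the hypothesis $n\geq e'$ and the existence of $\Phi$ to show $\phi$ kills $M_{\tors}$, then reduce to the free case and lift via the structure theorem by dividing by the appropriate powers of $t$. Your write-up is in fact slightly more explicit than the paper's (you spell out why $a_i\leq h\leq e$ lands $\widetilde{\phi}(f_i)$ in the correct target), but the argument is the same.
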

\begin{proof}
To begin, we have $\Phi(M_{\tors})\subseteq\fm^{2(n+1)-e'}/\fm^{2(n+1)}$ and since $n\geq e'$, it follows that $\phi$ kills $M_{\tors}$. As a result, $\phi$ induces an element $\overline{\phi}$ of the group $\Hom(M/M_{\tors},\fm^{n+1}/\fm^{n+2})$. Replacing $M$ with $M/M_{\tors}$, we may assume $M$ is free, and so we have a short exact sequence
\[
0\to M\to F\to Q\to 0.
\]

Let $v_1,\dots,v_d$ be a basis for $F$. By the structure theorem for modules over a PID, we have $a_1,\dots,a_d\in k^*$ and non-negative integers $h_1,\dots,h_d$ such that $a_1t^{h_1}v_1,\dots,a_dt^{h_d}v_d$ is a basis for $M$. Note that $h:=\len(Q)=\sum_{i=1} h_i$. We have $\phi(a_it^{h_i})=b_it^{n+1}$ for $b_i\in k$. Thus, we may extend $\phi$ to a map
\[
\widetilde{\phi}\colon E\to\fm^{n+1-e}/\fm^{n+2}
\]
by letting
\[
\widetilde{\phi}(v_i)=\frac{b_i}{a_i}t^{n+1-h_i}.\qedhere
\]
\end{proof}

We prove \autoref{technicalLemmaProp}, hence completing the proof of \autoref{technicalLemma}.

\begin{proof}[{Proof of \autoref{technicalLemmaProp}}]
Let $\cM$ be any coherent sheaf on $D$. We have a natural map $L\alpha^*L_{Y/k}\to L\varphi^*L_{\cX/k}$ which induces a map
\begin{equation}\label{eqn:liftFromAlphaToStack1}
\Ext^0(L\varphi^*L_{\cX/k},\cM)\to\Ext^0(L\alpha^*L_{Y/k},\cM)=\Hom(\alpha^*\Omega^1_{Y/k},\cM)
\end{equation}

We first claim that $L\alpha^*L_{Y/k}\to L\varphi^*L_{\cX/k}$ factors through $L^0\varphi^*L_{\cX/k}$, and hence the map in (\ref{eqn:liftFromAlphaToStack1}) factors as
\begin{equation}\label{eqn:liftFromAlphaToStack2}
\Ext^0(L\varphi^*L_{\cX/k},\cM)\stackrel{F}{\to}\Ext^0(L^0\varphi^*L_{\cX/k},\cM)\stackrel{G}{\to}\Hom(\alpha^*\Omega^1_{Y/k},\cM).
\end{equation}
Indeed, we have a morphism of exact triangles
\[
\xymatrix{
\tau_{\leq0}L\alpha^*L_{Y/k}\ar[r]\ar[d] & L\alpha^*L_{Y/k}\ar[r]\ar[d] & \tau_{>0}L\alpha^*L_{Y/k}\ar[d]\\
\tau_{\leq0}L\varphi^*L_{\cX/k}\ar[r] & L\varphi^*L_{\cX/k}\ar[r] & \tau_{>0}L\varphi^*L_{\cX/k}
}
\]
where $\tau$ denotes the canonical truncation. Since $L\alpha^*L_{Y/k}$ is concentrated in degrees at most $0$, the map $\tau_{\leq0}L\alpha^*L_{Y/k}\to L\alpha^*L_{Y/k}$ is the identity map; since $L\varphi^*L_{\cX/k}$ is concentrated in degrees at least $0$, $\tau_{\leq0}L\varphi^*L_{\cX/k}=L^0\varphi^*L_{\cX/k}$.

Next, we show that the map $F$ in (\ref{eqn:liftFromAlphaToStack2}) is surjective. To see this, we apply the spectral sequence from \cite[Tag 07A9]{stacks-project}
\[
E_2^{pq}=\Ext^p(L^{-q}\varphi^*L_{\cX/k},\cM)\Rightarrow \Ext^{p+q}(L\varphi^*L_{\cX/k},\cM),
\]
which yields an exact sequence
\begin{align*}
0\to&\Ext^1(L^1\varphi^*L_{\cX/k},\cM)\to \Ext^0(L\varphi^*L_{\cX/k},\cM)\\
&\stackrel{F}{\to} \Hom(L^0\varphi^*L_{\cX/k},\cM)\to \Ext^2(L^1\varphi^*L_{\cX/k},\cM).
\end{align*}
Since $k'\llbracket t \rrbracket$ is a PID, $\Ext^2(L^1\varphi^*L_{\cX/k},\cM)$ vanishes and so $F$ is surjective.

Lastly, note that $h=\len(L^0\varphi^*L_{\cX/k})$ and $e'=\len((\alpha^*\Omega^1_{Y/k})_{\tors})$. So, by \autoref{keyLemmaForLooijenga-twisted} and \autoref{extendingMapZellGraded}, we see $\phi$ extends to an element $\phi'$ of the group $\Hom(L^0\varphi^*L_{\cX/k},\fm^{n+1-e}/\fm^{n+2})$. Since $F$ is surjective, $\phi'$ lifts to an element of $\Ext^0(L\varphi^*L_{\cX/k},\fm^{n+1-e}/\fm^{n+2})$.
\end{proof}

\section{Motivic change of variables formula}

In this section we complete the proof of \autoref{theoremChangeOfVariables}(\ref{theoremPartChangeOfVariables}). We give the proof in \autoref{subsec:proofOfPartChangeVars} after proving a preliminary result in \autoref{subsec:stratsByGerbes}.

\subsection{Stratifications by trivial gerbes}\label{subsec:stratsByGerbes}

The goal of this subsection is to prove the following proposition. We expect this result is well-known, but we could not find it stated in the literature.

\begin{proposition}\label{stratifyLemmaIntoGaGerbes}
Let $\cY$ be a finite type Artin stack over $k$ such that the map $\cI_\cY \to \cY$ is separated, let $r \in \Z_{\geq 0}$, and suppose that for every field extension $k'$ of $k$ and every $y \in \cY(k')$, the stabilizer of $y$ is isomorphic to $\bG_{a,k'}^r$. Then $\cY$ can be stratified into locally closed substacks $\cY_1, \dots, \cY_\ell$ such that each $\cY_i$ is isomorphic to $Y_i \times_k B\bG_{a,k}^r$ for some finite type $k$-scheme $Y_i$.
\end{proposition}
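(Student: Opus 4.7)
The plan is to (i) stratify $\cY$ so that the inertia becomes a smooth abelian group scheme, (ii) rigidify to view $\cY$ as a gerbe over an algebraic space $Y$, and (iii) trivialize both the banding group and the gerbe class after stratifying $Y$ into affine pieces.

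First I would apply generic flatness on a smooth presentation and descend via Noetherian induction to stratify $\cY$ into finitely many locally closed substacks on which $\cI_\cY \to \cY$ is flat. Flatness combined with smooth geometric fibers ($\bG_a^r$) upgrades $\cI_\cY \to \cY$ to a smooth group scheme, and commutativity of every geometric fiber then forces $\cI_\cY$ to be abelian: the commutator map $\cI_\cY \times_\cY \cI_\cY \to \cI_\cY$ and the identity section agree on all geometric fibers, hence on the nose by the fibral criterion for flat morphisms into a separated target.

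Next, I would rigidify by the now-flat abelian group $\cI_\cY$ (centrality is automatic) to obtain a morphism $\cY \to Y$ realizing $\cY$ as a gerbe over a quasi-separated finite type algebraic space $Y$, banded by a smooth abelian group scheme $G \to Y$ that descends $\cI_\cY$ and whose geometric fibers are all $\bG_a^r$. I would then stratify $Y$ into finitely many locally closed affine subschemes $Y_i$ on which the rank-$r$ vector bundle $\mathrm{Lie}(G/Y)$ is free; such a stratification exists because every Noetherian quasi-separated algebraic space admits a finite stratification into affine schemes (use that any algebraic space contains a dense open subscheme and iterate), combined with the fact that a locally free sheaf on a Noetherian scheme trivializes on suitable affine opens. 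In characteristic $0$, the exponential map induces a group scheme homomorphism from the additive vector bundle $\mathrm{Lie}(G/Y)|_{Y_i}$ to $G|_{Y_i}$ that is an isomorphism on every geometric fiber (classical for commutative unipotent groups), and since both sides are smooth $Y_i$-group schemes of the same relative dimension, it is a global isomorphism; hence $G|_{Y_i} \cong \bG_{a,k}^r \times_k Y_i$. The restricted gerbe $\cY|_{Y_i} \to Y_i$ is then banded by $\bG_{a,k}^r$ and therefore classified by an element of $H^2_{\mathrm{fppf}}(Y_i, \bG_a^r) = H^2(Y_i, \cO_{Y_i})^r$, which vanishes because $Y_i$ is affine. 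Thus $\cY|_{Y_i}$ is a neutral $\bG_a^r$-gerbe, giving $\cY|_{Y_i} \cong Y_i \times_k B\bG_{a,k}^r$; pulling this stratification of $Y$ back along $\cY \to Y$ finishes the proof.

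The main technical obstacle I anticipate is the global identification $G|_{Y_i} \cong \bG_{a,k}^r \times_k Y_i$ via an exponential in families: the fiberwise statement is classical, but one needs to confirm that $\exp$ is defined globally (automatic for commutative unipotent Lie algebras since the Baker--Campbell--Hausdorff series truncates) and that the resulting group scheme morphism is a global isomorphism (which follows from the fibral isomorphism criterion for smooth morphisms of the same relative dimension). The rigidification step and the Grothendieck vanishing $H^2(Y_i, \cO_{Y_i}) = 0$ on affines are then standard, as is the Noetherian induction reducing $Y$ to a disjoint union of affines.
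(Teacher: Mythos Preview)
Your approach is correct and takes a genuinely different route from the paper. Both arguments reduce, via Noetherian induction and the observation that flat inertia makes $\cY$ a gerbe over an algebraic space $Y$, to showing that the band is $\bG_a^r$ and the resulting $\bG_a^r$-gerbe is neutral. The paper carries this out \emph{at the generic point} of $Y$: over a field $K$ the band is a twisted form of $\bG_a^r$, trivial by Hilbert~90 since $\Aut(\bG_a^r)=\GL_r$, and the gerbe class lies in $H^2(\Spec K,\cO_K^{\oplus r})=0$; it then spreads the resulting isomorphism $\cY_K\cong B\bG_{a,K}^r$ out to an open of $Y$ via limit results of Rydh. You instead work \emph{globally over affine strata}, trivializing the band via the exponential for smooth commutative unipotent groups in families and the gerbe via Serre vanishing $H^2(Y_i,\cO_{Y_i})=0$ on affines. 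Your route avoids the spreading-out machinery; the paper's route avoids the families version of the exponential by reducing to a single field, where the structure theory of commutative unipotent groups is elementary.

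Two small points to tighten. First, your fiberwise commutativity argument needs the source $\cI_\cY\times_\cY\cI_\cY$ to be reduced (the equalizer of the commutator and the identity section is a closed immersion, and surjective on geometric points, but that only forces it to be an isomorphism when the source is reduced); so you should pass to $\cY_\red$ at the outset, as the paper does. Second, the band descends to $Y$ only as a smooth commutative group \emph{algebraic space}, not \emph{a priori} a scheme, and the SGA3 exponential is stated for group schemes; a sentence is needed to bridge this (e.g., once the exponential is constructed and shown to be a fiberwise isomorphism, its source is a scheme so its target is too). This is precisely the subtlety the paper's generic-point reduction bypasses: over a field, group algebraic spaces of finite type are automatically schemes.
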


We begin with a couple of preliminary lemmas.

\begin{lemma}
\label{equalityMapsOnFibers}
Let $S$ be a scheme, and let $X$ and $Y$ be $S$-schemes with $X$ reduced and $Y\to S$ separated. If $f,g\colon X\to Y$ are two $S$-morphisms and for all fibers $s\colon\Spec k(s)\to S$, we have equality of the base change maps $f_s=g_s$, then $f=g$.
\end{lemma}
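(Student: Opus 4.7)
The plan is to realize $f = g$ by showing that the equalizer of $f$ and $g$ is all of $X$. Form the fiber product
\[
Z := X \times_{(f,g),\, Y \times_S Y,\, \Delta_{Y/S}} Y,
\]
where $\Delta_{Y/S}\colon Y \to Y \times_S Y$ is the diagonal. Because $Y \to S$ is separated, $\Delta_{Y/S}$ is a closed immersion, so $Z \hookrightarrow X$ is a closed immersion; and by the universal property of the diagonal, proving $f = g$ is equivalent to proving that this closed immersion is an isomorphism.

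Since $X$ is reduced and $Z \hookrightarrow X$ is cut out by some quasi-coherent ideal sheaf $\mathcal{I} \subset \mathcal{O}_X$, any such $\mathcal{I}$ with $|Z| = |X|$ must be contained in the nilradical of $\mathcal{O}_X$, which is zero; so it suffices to prove $|Z| = |X|$. Given any $x \in |X|$ lying over $s \in |S|$, lift $x$ to a point of the fiber $X_s := X \times_S \Spec k(s)$. Equalizers commute with base change, so $Z_s$ is the equalizer of $f_s, g_s \colon X_s \to Y_s$, which by hypothesis is all of $X_s$. Therefore $x \in |Z|$, and we are done.

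The argument is short, and the only step that merits checking is the compatibility of equalizers with base change: both $Z \times_S \Spec k(s)$ and the equalizer of $f_s, g_s$ can be described as pullbacks of $\Delta_{Y_s/k(s)}$ along $(f_s,g_s)$, and the identification follows by a routine fiber product manipulation. So there is no real obstacle — the content of the lemma is entirely packaged into (i) the fact that separatedness turns the equalizer into a closed subscheme, and (ii) the fact that on a reduced scheme the nilradical is zero.
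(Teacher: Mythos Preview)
Your proof is correct and essentially identical to the paper's: both form the equalizer $Z$ as the pullback of $\Delta_{Y/S}$ along $(f,g)$, use separatedness to conclude $Z\hookrightarrow X$ is a closed immersion, use reducedness of $X$ to reduce to surjectivity, and check surjectivity fiberwise via base change. The only cosmetic difference is that the paper phrases the conclusion as ``$i$ has a section'' rather than ``$i$ is an isomorphism,'' but for a closed immersion into a reduced scheme these are equivalent once surjectivity is established.
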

\begin{proof}
Consider the cartesian diagram
\[
\xymatrix{
Z\ar[r]\ar[d]_-{i} & Y\ar[d]^-{\Delta_{Y/S}}\\
X\ar[r]^-{(f,g)} & Y\times_S Y
}
\]
Note that $f=g$ if and only if $i$ has a section. Since $Y$ is separated over $S$, we see $i$ is a closed immersion. As $X$ is reduced, $i$ has a section if and only if $i$ is surjective. Since surjectivity of $i$ can be checked on fibers, it follows that $f=g$.
\end{proof}

\begin{lemma}\label{groupCommutativeIfFibersCommutative}
Let $k'$ be a characteristic $0$ field and let $S$ be a reduced $k'$-scheme which is locally of finite presentation. Suppose $f\colon G\to S$ is a separated group scheme which is flat and locally of finite presentation. If every fiber $f_s\colon G_s\to\Spec k'(s)$ is a commutative group scheme, then $G$ is a commutative.
\end{lemma}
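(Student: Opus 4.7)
The plan is to reduce commutativity of $G$ to an equality of two explicit $S$-morphisms and then apply \autoref{equalityMapsOnFibers}. Let $\mu\colon G\times_S G\to G$ be the multiplication and let $\tau\colon G\times_S G\to G\times_S G$ swap the two factors. Commutativity of $G$ as a group scheme over $S$ is exactly the assertion $\mu=\mu\circ\tau$, and by hypothesis these two maps agree on each fiber $G_s\times_{k'(s)}G_s\to G_s$ since each $G_s$ is commutative.

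To invoke \autoref{equalityMapsOnFibers} we need $G\times_S G$ to be reduced (the target $G$ is already separated over $S$). The plan for this is a standard two-step argument: first, since $k'$ has characteristic zero and each $G_s$ is a group scheme locally of finite type over $k'(s)$, Cartier's theorem gives that every fiber of $f\colon G\to S$ is smooth. Combined with $f$ being flat and locally of finite presentation, the fibral criterion for smoothness yields that $f$ is smooth. Consequently $G\times_S G\to S$ is smooth, and because $S$ is reduced, a smooth morphism from a reduced scheme has a reduced source (smooth maps are flat with geometrically regular, hence reduced, fibers).

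With $G\times_S G$ reduced and $G\to S$ separated, \autoref{equalityMapsOnFibers} applied to $f=\mu$ and $g=\mu\circ\tau$ forces $\mu=\mu\circ\tau$, so $G$ is commutative.

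The main subtle point is the appeal to characteristic zero: without it, a group scheme over a field need not be reduced (e.g.\ $\alpha_p$), so one could not conclude $G\times_S G$ is reduced and the propagation of the fiberwise equality to a global equality via \autoref{equalityMapsOnFibers} would fail. Everything else is routine: the fibral criterion for smoothness and the fact that smoothness spreads reducedness from the base to the total space are standard (see, e.g., the Stacks Project).
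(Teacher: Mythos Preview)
Your proof is correct and follows essentially the same approach as the paper: reduce commutativity to the equality $\mu=\mu\circ\tau$, use Cartier's theorem plus the fibral criterion to get that $G\times_S G\to S$ is smooth (hence $G\times_S G$ is reduced since $S$ is), and then apply \autoref{equalityMapsOnFibers}. The paper's argument is identical in structure and in its key inputs.
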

\begin{proof}
Let $m\colon G\times_S G\to G$ be the multiplication map and $\tau\colon G\times_S G\to G\times_S G$ be the map $\tau(g,h)=(h,g)$. To prove $G$ is commutative, we must show $m\circ\tau=m$. By Cartier's Theorem (see e.g., \cite[Tag 047N]{stacks-project}), every fiber $f_s\colon G_s\to\Spec k'(s)$ is a smooth group scheme. Since $f$ flat and locally of finite presentation, \cite[Tag 01V8]{stacks-project} shows that $f$ is smooth; in particular, $G\times_S G\to S$ is smooth. Since $S$ is reduced, $G\times_S G$ is as well. Lastly, the maps $m,m\circ\tau\colon G\times_S G\to G$ are equal on fibers, so $m\circ\tau=m$ by \autoref{equalityMapsOnFibers}.
\end{proof}

\begin{proof}[{Proof of \autoref{stratifyLemmaIntoGaGerbes}}]
First, replacing $\cY$ by an irreducible component, we may assume $\cY$ is irreducible. We may also replace $\cY$ by $\cY_\red$, so we will assume $\cY$ is reduced. By Noetherian induction, it suffices to show there exists a non-empty open substack of $\cY$ of the form $Y \times_k B\bG_{a,k}^r$ for some finite type $k$-scheme $Y$. Since $\cI_\cY\to\cY$ is quasi-compact, replacing $\cY$ by an open substack, we may assume it is a gerbe by \cite[Tag 06RC]{stacks-project}. Let $p\colon\cY\to W$ be the gerbe structure map, so $W$ is an algebraic space. Because $\cY$ is reduced and gerbe structure maps are stable under base change, we may assume $W$ is reduced.
Since $\cY$ is irreducible and finite type, so is $W$. Since $W$ is quasi-separated, it has a dense open scheme by \cite[Tag 06NH]{stacks-project}. We therefore assume $W$ is a scheme.

Let $\Spec(K)\to W$ be the generic point. We claim that $\cY_K:=\cY\times_W\Spec(K)$ is isomorphic to $B\bG_{a,K}^r$. Assuming this claim for the moment, we have a morphism $\alpha\colon\cY_K\to B\bG_{a,K}^r$. By \cite[Proposition B.2]{Rydh2015}, there exists an open subscheme $U\subset W$ for which $\alpha$ extends to a map $\cY\times_W U\to B\bG_{a,U}^r$. Since $\cY\to W$ is of finite presentation and $\alpha$ is an isomorphism, by \cite[Proposition B.3]{Rydh2015}, after possibly shrinking $U$, we may assume $\cY\times_Y U\to B\bG_{a,U}^r$ is an isomorphism, thereby producing our desired open substack of $\cY$ isomorphic to $B\bG_{a,U}^r=U\times_k B\bG_{a,k}^r$.

It remains to prove our claim, i.e.~we have reduced to showing that if $W=\Spec(K)$, then $\cY$ is the trivial $\bG_{a,K}^r$-gerbe. Note that because $k$ and thus $K$ have characteristic 0, the gerbe structure map $\cY \to W$ is smooth, so in this case where $W = \Spec(K)$, the stack $\cY$ is reduced. Let $\rho\colon\widetilde{Y}\to\cY$ be a smooth cover. We first show that all $S$-valued points of $\cY$ have commutative automorphism groups. This may be checked smooth locally on $S$, and hence it is enough to check that commutativity of the automorphism group $G$ of the $\widetilde{Y}$-valued point corresponding to $\rho$. We see $G=I_\cY\times_\cY\widetilde{Y}$. Since the stabilizer groups of all field-valued points are commutative, $G\to\widetilde{Y}$ is a commutative group scheme by \autoref{groupCommutativeIfFibersCommutative}.

Therefore, \cite[Tag 0CJY]{stacks-project} tells us that $\cY$ is a $\cG$-gerbe for some sheaf of abelian groups $\cG$ on $\Spec(K)$. Let $K'/K$ be an \'etale extension that trivializes the gerbe $\cY$. Since the $K'$-point of $\cY$ yielding the trivialization has stabilizer group scheme given by $\bG_{a,K'}^r$, we see $\cG$ is a twisted form of $\bG_{a,K}^r$. Such twisted forms are classified by $H^1_{et}(K,\Aut(\bG_a^r))=H^1_{et}(K,\GL_r)$, which vanishes by Hilbert's Theorem 90. Thus, $\cG=\bG_a^r$. Lastly, $\bG_a^r$-gerbes on $\Spec(K)$ are classified by $H^2(\Spec(K),\cO^{\oplus r})=0$. It follows that $\cY$ is the trivial $\bG_a^r$-gerbe.
\end{proof}

\subsection{Proof of \autoref{theoremChangeOfVariables}(\ref{theoremPartChangeOfVariables})}\label{subsec:proofOfPartChangeVars}

For the remainder of this paper, let $\pi: \cX \to Y$, $\cU$, $\cC$, $D$, and $\sigma$ be as in the statement of \autoref{theoremChangeOfVariables}, let $\sJ_Y$ be the $(\dim Y)$th fitting ideal of $\Omega^1_{Y/k}$, let $\sJ$ denote the ideal sheaf on $\cI_\cX$ defined by the closed substack $\cX \xrightarrow{\sigma} \cI_\cX$, and let $\sI$ be a quasi-coherent ideal sheaf on $\cX$ such that $\sI \cO_{\cI_\cX} \cdot \sJ = 0$ and the closed substack of $\cX$ defined by $\sI$ has underlying topological space equal to $|\cX| \setminus |\cU|$. Note that such an ideal $\sI$ is guaranteed to exist by \autoref{theoremStabilizersOfJets}(\ref{existenceOfIdealControllingStabilizers}).

We begin with the following key case of \autoref{theoremChangeOfVariables}(\ref{theoremPartChangeOfVariables}), where the functions of interest are all constant on $\cC$.

\begin{proposition}\label{propositionchangeOfVariablesConstantCase}
Suppose that $\ord_{\sJ_Y}$ is constant on $D$ and that $\het_{L_{\cX/Y}}^{(0)} + \het_{L_{\cX/Y}}^{(1)}$, $\het_{L_{\cX/Y}}^{(1)}$, $\het_{L\sigma^*L_{\cI_\cX/\cX}}^{(0)}$, and $\ord_\sI$ are constant on $\cC$. Then
\[
	\mu_Y(D) = \int_{\cC}\bL^{\het^{(0)}_{L\sigma^*L_{\cI_\cX/\cX}} - \het^{(0)}_{L_{\cX/Y}}} \diff \mu_\cX.
\]
\end{proposition}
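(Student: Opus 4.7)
Since both height functions appearing in the integrand are constant on $\cC$, write $r$ and $h$ for their respective values; the integrand is then the constant $\bL^{r-h}$, so the integral equals $\bL^{r-h}\mu_\cX(\cC)$. Together with $\dim\cX=\dim Y$, the proposition reduces to
\[
\e(\theta_n(\cC))=\bL^{h-r}\,\e(\theta_n(D))
\]
for some (equivalently, all sufficiently large) $n$. I would pick $n$ large enough that the defining sequences for $\mu_\cX(\cC)$ and $\mu_Y(D)$ have stabilized and that both \autoref{theoremStabilizersOfJets}(\ref{computationOfStabliizersOfJets}) and \autoref{technicalLemma} (applied with $e=h$) hold uniformly on $\cC$, which only requires lower bounds on $n$ in terms of $r$, $h$, $\ell$, and the constant value $e'$ of $\ord_\sI$.

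I would then produce the two factors $\bL^{-r}$ and $\bL^{h}$ separately. For $\bL^{-r}$, \autoref{theoremStabilizersOfJets}(\ref{computationOfStabliizersOfJets}) forces every geometric stabilizer of $\theta_n(\cC)$ to be $\bG_{a}^r$; after verifying that $\theta_n(\cC)$ inherits separated inertia from $\cX$ via the Weil-restriction construction of jet stacks, \autoref{stratifyLemmaIntoGaGerbes} stratifies $\theta_n(\cC)$ into trivial gerbes $Y_i\times_k B\bG_{a,k}^r$, giving $\e(\theta_n(\cC))=\bL^{-r}\sum_i\e(Y_i)$. The remaining claim $\sum_i\e(Y_i)=\bL^h\e(\theta_n(D))$ I would obtain by analyzing the map $g\colon\theta_n(\cC)\to\theta_n(D)$ induced by $\sL(\pi)$: surjectivity on geometric points is immediate from the bijection $\overline{\cC}(k')\simeq D(k')$ together with the surjectivity of $\theta_n$, and \autoref{technicalLemma} with $e=h$ shows that two geometric $n$-jets in $\cC$ with the same image under $g$ have isomorphic $(n-h)$-truncations. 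Combined with the deformation-theoretic description of $\theta_n(\cX)\to\theta_{n-h}(\cX)$ as a torsor under $\Ext^0(L\varphi_{n-h}^*L_{\cX/k},\fm^{n-h+1}/\fm^{n+1})$ (cf.~\autoref{replacingExtWithH}), the cotangent-complex triangle $L\pi^*L_{Y/k}\to L_{\cX/k}\to L_{\cX/Y}$, and the structural information in \autoref{keyLemmaForLooijenga-twisted}, this identifies each geometric fiber of $\coprod_i Y_i\to\theta_n(D)$ as an affine space of dimension $h$.

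The principal obstacle is packaging this pointwise fiber description into the motivic identity $\sum_i\e(Y_i)=\bL^h\e(\theta_n(D))$. To this end I would refine both the gerbe stratification of $\theta_n(\cC)$ and the image stratification of $\theta_n(D)$ into finitely many constructible pieces on which all relevant $\Ext$-dimensions (equivalently, the lengths of the torsion pieces appearing in \autoref{keyLemmaForLooijenga-twisted} that govern $h$ and $e'$) are constant, and then argue that on each such piece the map $\coprod_i Y_i\to\theta_n(D)$ becomes, possibly after a piecewise-trivial finite \'etale alteration, a genuine $\bA^h$-bundle. Additivity in $\widehat{\sM}_k$ then yields the desired equality.
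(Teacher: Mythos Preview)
Your outline has the right skeleton—factor off the $\bG_a$-gerbe contribution via \autoref{stratifyLemmaIntoGaGerbes}, then compute fiber classes of the induced map to $\theta_n(D)$ using deformation theory and \autoref{technicalLemma}—but there is a real gap in the fiber computation. You describe the fibers of $\sL_n(\cX)\to\sL_{n-h}(\cX)$ as ``torsors under $\Ext^0(L\varphi_{n-h}^*L_{\cX/k},\fm^{n-h+1}/\fm^{n+1})$'' and conclude that the geometric fibers of $\coprod_i Y_i\to\theta_n(D)$ are copies of $\bA^h$. For a genuine Artin stack $\cX$ this is false: the fiber of $\sL_n(\cX)\to\sL_{n-e}(\cX)$ is $\Ext^0\times_{k'} B\Ext^{-1}$, with the $\Ext^{-1}$-term (automorphisms of deformations) identified with $\Ext^{-1}(L\varphi_{n-e}^*L_{\cX/Y},I)$ via the triangle $L\pi^*L_{Y/k}\to L_{\cX/k}\to L_{\cX/Y}$. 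This term is nonzero exactly when $h_1:=\het^{(1)}_{L_{\cX/Y}}>0$, a quantity your proposal never invokes despite its being one of the constants in the hypothesis. Omitting $h_1$ is not cosmetic: in the correct computation the affine part of the fiber has dimension $h_{01}:=\het^{(0)}_{L_{\cX/Y}}+\het^{(1)}_{L_{\cX/Y}}$, not $h$, and it is only after cancelling the $B\bG_a^{h_1}$ factor that one recovers the exponent $h=h_{01}-h_1$.

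The paper resolves both this and your ``packaging'' obstacle by reversing the order of operations: it applies \autoref{stratifyLemmaIntoGaGerbes} to $\cC_{n-e}=\theta_{n-e}(\cC)$ with $e=h_{01}+h_1$, not to $\theta_n(\cC)$. By \autoref{technicalLemma} (used with this $e$), every fiber of $\theta_n(\cC)\to\theta_n(D)$ lies over a single $k'$-point of $\cC_{n-e}$ and hence in a single stratum $\cW_i$; after pulling the trivial-gerbe cover $V_i\to\cV_i$ back to level $n$, the fiber in the resulting scheme $\cZ_i$ is a closed substack of the full deformation space and is computed directly as $\bA_{k'}^{h_{01}}\times_{k'} B\bG_{a,k'}^{h_1}$ using \autoref{replacingExtWithH} and \autoref{theoremMainPropertiesOfHeightFunctions}(\ref{theoremRelatingArcHeightFunctionToJetHeightFunction}) (the inequality $e\ge h_{01}$ is precisely what is needed for the latter). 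Because the fibers are now constant stacks, \cite[Proposition 2.8]{SatrianoUsatine} gives $\e(\cZ_i)=\bL^{h_{01}-h_1}\e(Q_i)$ immediately, with no appeal to an ``$\bA^h$-bundle after \'etale alteration''. Stratifying at level $n-e$ rather than $n$ is the key structural idea you are missing.
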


\begin{proof}
Let $e' \in \Z_{\geq 0}$ be such that $\ord_{\sJ_Y}$ is equal to $e'$ on $D$, and let $h_{01}, h_{1}, h_0, r \in \Z_{\geq 0}$ be such that $\het_{L_{\cX/Y}}^{(0)} + \het_{L_{\cX/Y}}^{(1)}$, $\het_{L_{\cX/Y}}^{(1)}$, $\het_{L\sigma^*L_{\cI_\cX/\cX}}^{(0)}$, and $\ord_{\sI}$ are equal to $h_{01}$, $h_1$, $h_0$ and $r$, respectively, on $\cC$. Because $\cC$ is a cylinder, there exists some $\ell \in \Z_{\geq 0}$ such that $\theta_\ell^{-1}(\theta_\ell(\cC)) = \cC$. Set $h = h_{01} - h_1$, and note that $\het^{(0)}_{L_{\cX/Y}}$ is equal to $h$ on $\cC$, so in particular $h \in \Z_{\geq 0}$. Set $e = h_{01} + h_1$. Because the (isomorphic) image of $\cU$ in $Y$ is smooth, we have that $D \cap \sL(Y \setminus Y_{\mathrm{sm}}) = \emptyset$, where $Y_\mathrm{sm}$ is the smooth locus of $Y$. Thus there exists some $n_D \in \Z_{\geq 0}$ that satisfies the following: for any field extension $k'$ of $k$, any $n \geq n_D$, and any $\psi_n \in (\theta_n(D))(k')$, there exists some $\psi \in D(k')$ such that $\theta_n(\psi) = \psi_n$, see e.g. \cite[Lemma 7.5]{SatrianoUsatine}.

Fix any $n \geq \max(\ell+e, r+e, 2\max(r, h_0)-1+e, e', n_D, 2e-1)$. Because $\dim \cX = \dim Y$, it is sufficient to show that
\[
	\e(\theta_n(D)) = \bL^{h_0 - (h_{01} - h_1)}\e(\theta_n(\cC)).
\]
Set $\cC_{n-e} = \theta_{n-e}(\cC) \subset |\sL_n(\cX)|$. For every field extension $k'$ of $k$, every $k'$ valued point of $\cC_{n-e}$ lifts to a $k'$-valued point of $\cC$ because $\cX$ is smooth and $n-e \geq \ell$. Thus because $n-e \geq \max(r, 2\max(r, h_0)-1)$, \autoref{theoremStabilizersOfJets}(\ref{computationOfStabliizersOfJets}) implies that every $k'$-valued point of $\cC_{n-e}$ has stabilizer isomorphic to $\bG_{a,k}^{h_0}$. Because $\cX$ has separated diagonal, $\sL_{n-e}(\cX)$ has separated diagonal \cite[Proposition 3.8(xx)]{Rydh}. Thus \autoref{stratifyLemmaIntoGaGerbes} implies that $\cC_{n-e}$ can be stratified into (finitely many) locally closed substacks $\{\cV_i\}_i$ of $\sL_{n-e}(\cX)$ such that each $\cV_i$ is isomorphic to $V_i \times_k B\bG_{a,k}^{h_0}$ for some finite type $k$-scheme $V_i$. Let each $V_i \to \cV_i$ be the quotient map by the trivial $\bG_{a,k}^{h_0}$-action on $V_i$, and let each $\cV_i \hookrightarrow \sL_{n-e}(\cX)$ be the inclusion. Also for each $i$, let $\cZ_i \to \cW_i \hookrightarrow \sL_n(\cX)$ be the base change of $V_i \to \cV_i \hookrightarrow \sL_{n-e}(\cX)$ along $\theta^n_{n-e}: \sL_n(\cX) \to \sL_{n-e}(\cX)$. Because $\cZ_i \to \cW_i$ is a $\bG_{a,k}^{h_0}$-torsor and $\bG_{a,k}^{h_0}$ is a special group,
\[
	\e(\cZ_i) = \bL^{h_0}\e(\cW_i).
\]
Because $n - e \geq \ell$ and $\cX$ is smooth, $\theta_n(\cC) = (\theta^{n}_{n-e})^{-1}(\cC_{n-e})$, so the $\cW_i$ form a partition of $\theta_n(\cC)$. Thus
\begin{equation}\label{equationOneInChangeOfVariablesConstantCase}
	\sum_i \e(\cZ_i) = \bL^{h_0} \e(\theta_n(\cC)).
\end{equation}
Now set $Q_i = \sL_n(\pi)(|\cW_i|) \subset |\sL_n(Y)|$. By Chevalley's theorem, the $Q_i$ are constructible subsets of $|\sL_n(Y)|$. We will now show that the $Q_i$ partition $\theta_n(D)$. The $Q_i$ cover $\theta_n(D)$ because $\cC$ surjects onto $D$ and because the $|\cW_i|$ cover $\theta_n(\cC)$. We just need to show that the $Q_i$ are mutually disjoint. To do this, it is sufficient to show that if $k'$ is a field extension of $k$ and $\varphi_n, \varphi'_n \in \theta_n(\cC)(k')$ have the same image in $\sL_n(Y)$, then $\theta^n_{n-e}(\varphi_n) \cong \theta^n_{n-e}(\varphi'_n)$. Because $\cX$ is smooth and $n \geq \ell$, there exist $\varphi, \varphi' \in \cC(k')$ that truncate to $\varphi_n, \varphi'_n$, respectively. Because $n \geq \max(\ell + e, e') \geq \max(\ell + h, e')$ and $n \geq e \geq h$, \autoref{technicalLemma} implies that $\theta_{n-e}(\varphi) \cong \theta_{n-e}(\varphi')$, so we have shown that the $Q_i$ partion $\theta_n(D)$. Therefore
\begin{equation}\label{equationTwoInChangeOfVariablesConstantCase}
	\sum_i \e(Q_i) = \e(\theta_n(D)).
\end{equation}
Our goal is to relate $\e(Q_i)$ to $\e(\cZ_i)$. Because the composition $\cZ_i \to \cW_i \hookrightarrow \sL_n(\cX) \xrightarrow{\sL_n(\pi)} \sL_n(Y)$ maps $|\cZ_i|$ onto $Q_i$, we will accomplish this by controlling the fiber (with reduced structure) of $\cZ_i \to \sL_n(Y)$ over every point of $Q_i$.

Fix some $i$, some field extension $k'$ of $k$, and some $\psi_n \in Q_i(k')$. Because $n \geq n_D$, there exists some $\psi \in D(k')$ that truncates to $\psi_n$. By the assumption on $\overline{\cC}(k') \to D(k')$, there is some $\varphi \in \cC(k')$ whose image is $\psi$. Set $\varphi_n = \theta_n(\varphi) \in \theta_n(\cC)(k')$ and $\varphi_{n-e} = \theta_{n-e}(\varphi)$. We have already shown that the $|\cW_j|$ have disjoint images in $\sL_n(Y)$, so the fact that $\varphi_n$ has image $\psi_n \in Q_i(k')$ implies that $\varphi_n \in \cW_i(k')$. Because $\cZ_i \to \cW_i$, as a base change of $V_i \to \cV_i$, is representable and a $\bG_{a,k}^{h_0}$-torsor and $\bG_{a,k}^{h_0}$ is a special group, there exists some $z \in \cZ_i(k')$ whose image in $\cW_i$ is 2-isomorphic to $\varphi_n$. Set $v \in V_i(k')$ to be the image of $z$ along $\cW_i \to V_i$. Now consider the following 2-commutative diagram.
\begin{center}
\begin{tikzcd}
& \cZ_i \arrow[r] \arrow[d] & \cW_i \arrow[r] \arrow[d] & \sL_n(\cX) \arrow[r, "\sL_n(\pi)"] \arrow[d, "\theta_n"] & \sL_n(Y) \arrow[d, "\theta_n"]\\
\Spec(k') \arrow[ur, "z"] \arrow[r, "v"] \arrow[rrr, bend right, "\varphi_{n-e}"] \arrow[urrr, bend left = 60, "\varphi_n"] & V_i \arrow[r] & \cV_i \arrow[r] & \sL_{n-e}(\cX) \arrow[r, "\sL_{n-e}(\pi)"] & \sL_{n-e}(Y)
\end{tikzcd}
\end{center}
Let $\cF$ be the fiber of $\cZ_i \to \sL_n(Y)$ over $\psi_n$, and let $\cH$ be the fiber of $\cZ_i \to V_i$ over $v$. Because $\sL_n(Y)$ and $V_i$ are schemes, $\cF$ and $\cH$ are closed substacks of $\cZ_i \otimes_k k'$. We will show that $|\cF| \subset |\cH|$.

Suppose for some field extension $k''$, we have $z' \in \cF(k'')$, so the image of $z'$ in $\sL_n(Y)$ is $\psi_n \otimes_{k'} k''$. We already saw (when proving that the $Q_j$ were disjoint) that any two $k''$-points of $\theta_n(\cC)$ that have the same image in $\sL_n(Y)$ must have 2-isomorphic $(n-e)$-truncations (this was where we used \autoref{technicalLemma} above). Thus $z'$ and $z \otimes_{k'} k''$ have 2-isomorphic images in $\sL_{n-e}(\cX)(k'')$. Because the quotient map $V_i \to \cV_i$ is a section of $\cV_i = V_i \times_k B\bG_{a,k}^{h_0} \to V_i$, it induces an injection $V_i(k'') \to \overline{\cV_i}(k'')$. Also $\cV_i \hookrightarrow \sL_{n-e}(\cX)$ induces an injection $\overline{\cV_i}(k'') \to \overline{\sL_{n-e}(\cX)}(k'')$. Therefore $z'$ and $z \otimes_{k'} k''$ have the same image in $V_i$, namely $v \otimes_{k'} k''$. Thus $z' \in \cH(k'')$, and we have shown that $|\cF| \subset |\cH|$, so $\cF_\red$ is a closed substack of $\cH$.

Recall that we want to compute $\cF_\red$. We will now accomplish this by computing $\cH$ and understanding $|\cF|$ as a subset of $|\cH|$. Because $n \geq 2e - 1$, we have that $\Spec(k'[t]/(t^{n-e+1})) \hookrightarrow \Spec(k'[t]/(t^{n+1})$ is defined by a square-zero ideal. Thus $\cH$, which is isomorphic to the fiber of $\sL_n(\cX) \to \sL_{n-e}(\cX)$ over $\theta_{n-e}(\varphi_n)$, can be identified with the deformation space (see e.g., the proof of \cite[Lemma 3.34]{SatrianoUsatine})
\[
	\Ext^0(L\varphi_{n-e}^* L_{\cX/k}, I ) \times_{k'} B\Ext^{-1}(L\varphi_{n-e}^* L_{\cX/k}, I ),
\]
where $I = (t^{n-e+1})/ (t^{n+1})$ and by slight abuse of notation, we are using $L\varphi_{n-e}^*$ to mean the derived pullback along the associated jet $\Spec(k'[t]/(t^{n-e+1})) \to \cX$. Similarly the fiber $H$ of $\sL_n(Y) \to \sL_{n-e}(Y)$ over $\theta^n_{n-e}(\psi_n)$ can be identified with the deformation space
\[
	\Ext^0(L\varphi_{n-e}^* L\pi^* L_{Y/k}, I )
\]
such that the map $\cH \to H$ is identified with the projection $\Ext^0(L\varphi_{n-e}^* L_{\cX/k}, I ) \times_{k'} B\Ext^{-1}(L\varphi_{n-e}^* L_{\cX/k}, I ) \to \Ext^0(L\varphi_{n-e}^* L_{\cX/k}, I )$ followed by the linear map 
\[
	f: \Ext^0(L\varphi_{n-e}^* L_{\cX/k}, I ) \to \Ext^0(L\varphi_{n-e}^* L\pi^* L_{Y/k}, I )
\]
induced by $L\pi^* L_{Y/k} \to L_{\cX/k}$, and $\cF_\red$ is identified with the closed substack of $\cH$
\[
	\ker f  \times_{k'} B\Ext^{-1}(L\varphi_{n-e}^* L_{\cX/k}, I ).
\]
Therefore
\[
	\cF_\red \cong \bA_{k'}^{\dim_{k'} \ker f} \times_{k'} B\bG_{a,k'}^{\dim_{k'} \Ext^{-1}(L\varphi_{n-e}^* L_{\cX/k}, I )}. 
\]
By the exact triangle $L\pi^*L_{Y/k} \to L_{\cX / k} \to L_{\cX / Y}$ and the fact that $Y$ is a scheme, we have that
\[
	\ker f \cong \Ext^0(L\varphi_{n-e}^* L_{\cX / Y}, I)
\]
and
\[
	\Ext^{-1}(L\varphi_{n-e}^* L_{\cX/k}, I ) \cong \Ext^{-1}(L\varphi_{n-e}^* L_{\cX/Y}, I ).
\]
Thus \autoref{replacingExtWithH} implies that
\[
	\dim_{k'} \ker f = \het^{(0)}_{e-1, L_{\cX/Y}}(\theta_{e-1}(\varphi)),
\]
and
\[
	\dim_{k'}\Ext^{-1}(L\varphi_{n-e}^* L_{\cX/k}, I ) = \het^{(1)}_{e-1, L_{\cX/Y}}(\theta_{e-1}(\varphi)),
\]
so
\[
	\cF_\red \cong \bA_{k'}^{\het^{(0)}_{e-1, L_{\cX/Y}}(\theta_{e-1}(\varphi))} \times_{k'} B\bG_{a,k'}^{\het^{(1)}_{e-1, L_{\cX/Y}}(\theta_{e-1}(\varphi))}. 
\]
Because $e \geq h_{01}+h_1$ and $\het^{(2)}_{L_{\cX/Y}}(\varphi) = 0$, \autoref{theoremMainPropertiesOfHeightFunctions}(\ref{theoremRelatingArcHeightFunctionToJetHeightFunction}) implies
\[
	\het^{(0)}_{e-1, L_{\cX/Y}}(\theta_{e-1}(\varphi)) = h_{01},
\]
and
\[
	\het^{(1)}_{e-1, L_{\cX/Y}}(\theta_{e-1}(\varphi)) = h_1,
\]
so
\[
	\cF_\red \cong \bA_{k'}^{h_{01}} \times_{k'} B\bG_{a,k'}^{h_1}. 
\]
Now that we have computed $\cF_\red$ and shown that it does not depend on $\psi_n$, we can finally relate $\e(\cZ_i)$ and $\e(Q_i)$. Specifically \cite[Remark 2.7 and Proposition 2.8]{SatrianoUsatine} implies
\[
	\e(\cZ_i) = \e(\bA_{k}^{h_{01}} \times_k B\bG_{a,k}^{h_1}) \e(Q_i) = \bL^{h_{01} - h_1} \e(Q_i)
\]
where the second equality is because $\bG_{a,k}^{h_1}$ is a special group. This along with equations (\ref{equationOneInChangeOfVariablesConstantCase}) and (\ref{equationTwoInChangeOfVariablesConstantCase}) give
\[
	\e(\theta_n(D)) = \bL^{h_0 - (h_{01} - h_1)}\e(\theta_n(\cC)),
\]
completing the proof of this proposition.
\end{proof}

We can now finish the proof of \autoref{theoremChangeOfVariables}(\ref{theoremPartChangeOfVariables}) by reducing to \autoref{propositionchangeOfVariablesConstantCase}. The remainder of the argument is essentially standard, but we still briefly outline it for completeness.

\begin{proof}[Proof of \autoref{theoremChangeOfVariables}(\ref{theoremPartChangeOfVariables})]
Because the (isomorphic) image of $\cU$ in $Y$ is smooth, we have that $D \cap \sL(Y \setminus Y_{\mathrm{sm}}) = \emptyset$, where $Y_\mathrm{sm}$ is the smooth locus of $Y$, so $\ord_{\sJ_Y}: D \to \Z$ is constructible. Thus $D$ can be partitioned into finitely many cylinders where $\ord_{\sJ_Y}$ is constant, and because the preimage of a cylinder along $\sL(\pi)$ is a cylinder, it is straightforward to reduce to the case where $\ord_{\sJ_Y}$ is constant on $D$. Thus we assume $\ord_{\sJ_Y}$ is constant on $D$.

Recall that $\het_{L_{\cX/Y}}^{(0)} + \het_{L_{\cX/Y}}^{(1)}$, $\het_{L_{\cX/Y}}^{(1)}$, and $\het_{L\sigma^*L_{\cI_\cX/\cX}}^{(0)}$ are constructible functions on $\cC$ (see the proof of \autoref{theoremChangeOfVariables}(\ref{theoremMeasurablePartOfCOV})). Also $\ord_{\sI}: \cC \to \Z$ is constructible by \autoref{remarkOrdIsConstructible}. Thus there exists a finite partition of $\cC$ into cylinders $\{\cC_i\}_i$ such that $\het_{L_{\cX/Y}}^{(0)} + \het_{L_{\cX/Y}}^{(1)}$, $\het_{L_{\cX/Y}}^{(1)}$, $\het_{L\sigma^*L_{\cI_\cX/\cX}}^{(0)}$, and $\ord_{\sI}$ are all constant on each $\cC_i$.

A straightforward argument (essentially identical to a standard argument in the scheme case) using  \autoref{technicalLemma} (see e.g., the use of \autoref{technicalLemma} in the proof of \autoref{propositionchangeOfVariablesConstantCase}) and the fact that there are only finitely many $\cC_i$ shows that there exists some $n \in \Z_{\geq 0}$ such that the $\sL_n(\pi)(\theta_n(\cC_i))$ are pairwise disjoint. Therefore $D$ is partitioned into the cylinders $D_i = D \cap \theta_n^{-1}(\sL_n(\pi)(\theta_n(\cC_i)))$, where we note that each $\sL_n(\pi)(\theta_n(\cC_i))$ is constructible by Chevalley's theorem. Then $\cC_i \to D_i$ satisfies the hypotheses of \autoref{propositionchangeOfVariablesConstantCase}, and the theorem follows by summing over $i$.
\end{proof}

\bibliographystyle{alpha}
\bibliography{MCVFAS}

\end{document}